%\immediate\write18{bibtex8 "\jobname.aux"}
\documentclass[10pt, reqno, oneside, english]{smfart}

\usepackage{smfthm}

%---------- les packages ----------%
\usepackage[height=22cm, bottom=3cm]{geometry}
\usepackage{amsmath, amsthm, amssymb,amsfonts, amscd, mathrsfs}
\usepackage[utf8]{inputenc}
\usepackage{url}
\usepackage{braket}
\usepackage{enumerate}
\usepackage{xcolor}
\usepackage[colorlinks=true, linktoc=page, citecolor=blue, linkcolor=blue, urlcolor=blue]{hyperref}

%---------------- Interligne, indentation, etc--------------------%
\usepackage{setspace}
\setstretch{1.15}
%\setlength\parindent{12pt}
%\setlength\parskip{0pt}

%--------- Numerotation des equations-------------%

\numberwithin{equation}{section}

%--------- Math norm-------------%

\newcommand{\norme}[1]{\left\Vert #1\right\Vert}

%--------------- Titre 

\title[Invariant Gaussian fields and their nodal volume]{Invariant Gaussian fields on homogeneous spaces: \\ explicit constructions and mean nodal volume}
\author{Alexandre Afgoustidis}
\address{CNRS \& Institut Élie Cartan de Lorraine, Nancy \& Metz, France}
\email{alexandre.afgoustidis@math.cnrs.fr}

\usepackage{graphicx}

\begin{document}

%-------------------------- Frontmatter------------------------%
\frontmatter
\begin{abstract} We review and study some of the properties of smooth Gaussian random fields defined on a homogeneous space, under the assumption that the probability distribution is invariant under the isometry group of the space. We first give an exposition, building on early results of Yaglom, of the way in which representation theory and the associated special functions make it possible to give completely explicit descriptions of these fields in many cases of interest. We then turn to the expected size of the zero-set: extending two-dimensional results from Optics and Neuroscience, we show that every invariant field comes with a natural unit of volume (defined in terms of the geometrical redundancies in the field) with respect to which the average size of the zero-set is given by a universal constant depending only on the dimension of the source and target spaces, and not on the precise symmetry exhibited by the field.
\end{abstract}

%-------------------------- Mainmatter------------------------%
\mainmatter
\maketitle

%-------------------------------------------------------------------------------------------------------%
%-------------------------------------------------------------------------------------------------------%
%-------------------------------------------------------------------------------------------------------%
%-------------------------------------------------------------------------------------------------------%
%-------------------------------------------------------------------------------------------------------%
%-------------------------------------------------------------------------------------------------------%
%-------------------------------------------------------------------------------------------------------%
%-------------------------------------------------------------------------------------------------------%
%-------------------------------------------------------------------------------------------------------%

\section{Introduction}\label{sec:intro}

\subsection{Nodal sets of real-valued random fields} \label{subsec:intro_reel}
Suppose $X$ is a Riemannian manifold and $\psi: X \to \mathbb{R}$ is a random function. {It is a classical problem to try to understand the geometry of the \emph{nodal set} $\psi^{-1}(0)$ of the samples of $\psi$, especially when each sample of $\psi$ is an eigenfunction of the Laplace-Beltrami operator on~$X$.

It is not an easy problem. At the present time, very little can be said unless $\psi$ is a Gaussian random field or can be obtained from one by a simple modification. Even when $X$ is the two-sphere and $\psi$ is a real-valued Gaussian field with values in a (finite-dimensional) eigenspace of the Laplacian, it is not easy to study such an apparently simple quantity as the mean number of connected components of the nodal set. When $X$ is a compact manifold and $\psi$ is a real-valued Gaussian field with values in a (finite-dimensional) eigenspace of the Laplacian, much effort is being directed at understanding the Betti numbers of the nodal set of $\psi$ and the way they depend on the eigenvalue; see \cite{Anantharaman}.

If one turns from the topology to the \emph{size} of the nodal set, then more can be said. For \emph{deterministic} functions, Yau conjectured that in a region of $X$ of fixed volume, the nodal volume of \emph{any} eigenfunction on the Laplacian on $X$ is bounded above and below by constant multiples of the square-root of the eigenvalue \cite{Yau1, Yau2}. The conjecture has led to spectacular theorems in the analytic case \cite{DonnellyFefferman}, and recently in the smooth case \cite{Logunov}.

For \emph{random} functions coming from Gaussian fields, the \emph{mean nodal volume} has been made explicit for several fields with values in an eigenspace of the Laplacian on some particularly symmetric compact manifolds: see \cite{Berard} for spheres and other compact rank-one symmetric spaces, \cite{RudnickWigman} for flat tori. In harmony with Yau's conjecture, the mean volume always turns out to be the product of the square-root of the eigenvalue with a constant that depends only on the dimension of the manifold. The variance and distribution of the nodal volume are currently being subjected, for particular spaces $X$, to  intense scutiny: see \cite{KKW, PeccaReel, Pecca, Cammarota, RossiWigman} for flat tori, \cite{Rossi} for spheres. Symmetry is an important ingredient, via Fourier analysis, for these works.

This article tries to illustrate the idea that if $X$ has sufficiently many symmetries to be equipped with an isometric and transitive action of a Lie group $G$, then the symmetry greatly simplifies the study of $G$-invariant smooth random fields (we do not assume $X$ to be compact). 

The paper has two themes. The first is the interplay between invariant random fields on $X$ and the representation theory of $G$, via spherical functions. We shall give an overview of the relationship, and provide explicit constructions on certain important examples of spaces $X$. The discussion will be mostly expository. 

The second theme is the mean nodal volume of invariant Gaussian fields: we shall give a precise estimate of for the average size of the nodal set for general $X$. For instance, Theorem \ref{Main} below, when specialized to real-valued fields with samples in an eigenspace of the Laplacian and combined with Proposition \ref{Monoch}, will yield the following result.

\begin{theo}[special case of Theorem \ref{Main}] Let $X$ be a Riemannian manifold equipped with a transitive metric-preserving action of a Lie group $G$. Let $\psi$ be a smooth real-valued Gaussian random field on $X$ such that $\psi(x)$ follows, for every $x \in X$, a standard normal distribution. Assume that the probability distribution of $\psi$ is $G$-invariant and that there exists a real number~$E$ such that almost all samples of $\psi$ lie in the eigenspace of the Laplace-Beltrami operator $\Delta_X$ for the eigenvalue $E$.

The nodal set $\psi^{-1}(\{0\})$  is generically a hypersurface; for any Borel region $B$ of $X$, write $\mathcal{V}_A$ for the real-valued random variable recording the volume of $\psi^{-1}(\{0\}) \cap B$. Then 
$$ \mathbb{E}[\mathcal{V}_B] = c \cdot \sqrt{E}\cdot \text{Vol}(B)$$
where $c$ is a positive number that depends only on the dimension of $X$.
\end{theo}

\subsection{Zeroes of complex-valued fields on the plane and an observation from Neuroscience} \label{subsec:intro_pinwheels} The work that led to this paper did not begin with \emph{real}-valued fields in mind, however.  Let us describe some simple facts about some complex-valued Gaussian fields on the plane that have proved useful in Neuroscience, in relation with a striking fact observed a few years ago \cite{Kaschube} in the \emph{primary visual cortex} of mammals. 

Part of the neurons' specialization in that area can be described, for each individual in many mammalian species, with the help of a continuous \emph{complex-valued} map defined on the cortical surface. This map is called the \emph{orientation map}, and its zeroes are of particular biological significance (see \cite{Hubel}). If we assimilate the cortical surface, in the central region of the primary visual area, with a Euclidean plane, then the orientation map can be assimilated with a function $\mathbf{z}_{{\mathrm{exp}}}: \mathbb{R}^2 \to \mathbb{C}$, {which can be measured experimentally}. The traditional wisdom is that {the observed map} $\mathbf{z}_{{\mathrm{exp}}}$ is very roughly a combination of plane waves with various wavevector directions and various phases, but a common wavelength $\Lambda_{{\mathrm{exp}}}$ (which depends on the species and the individual). It has been observed that the average density of the zero-set of $\mathbf{z}_{{\mathrm{exp}}}$ is strikingly similar across individuals and species:

\begin{enonce*}{Experimental fact}[\cite{Kaschube}] Let $\Lambda_{{\mathrm{exp}}}$ be the characteristic wavelength of a cortical orientation map $\mathbf{z}_{{\mathrm{exp}}}$, and $\mathcal{N}_{{\mathrm{exp}}}$ stand for the average number of zeroes of $\mathbf{z}_{{\mathrm{exp}}}$ in a region with area $\Lambda_{{\mathrm{exp}}}^2$ of the map. The number $\mathcal{N}_{{\mathrm{exp}}}$ has been measured in individual cortical maps coming from quite different species. The value of $\mathcal{N}_{{\mathrm{exp}}}$ in each individual cortex is close to 3.14. \end{enonce*} %

Useful models for the early stage in orientation map development \cite{WolfGeisel, WG2003} treat the experimental map $\mathbf{z}_{{\mathrm{exp}}}$ in a given individual as a single realization of a random field. In fact, one usually treats $\mathbf{z}_{{\mathrm{exp}}}$ as a realization of Gaussian random field $\psi: \mathbb{R}^2 \to \mathbb{C}$ whose samples are in an eigenspace of the Laplacian, and one crucially adds the further assumption, meant to reflect the initial homogeneity of the biological tissue, that $\psi$ is invariant under all translations and rotations of $\mathbb{R}^2$  (see \cite{AAVariance} for a discussion). In this context, Wolf and Geisel discovered the following mathematical fact \cite{WolfGeisel}, almost simultaneously exhibited in Optics \cite{BerryDennis}.

 \begin{theo}[\cite{WolfGeisel, BerryDennis}] \label{th:wolf_geisel} Suppose $\psi$ is a stationary and isotropic Gaussian field on $\mathbb{R}^2$ with values in $\mathbb{C}$, assume that the complex-valued variable $\psi(x)$ follows a standard Gaussian distribution for all $x \in \mathbb{R}^2$, and assume that there exists $\Lambda>0$ such that almost all samples of $\psi$ satisfy $\Delta \psi = \left( \frac{2\pi}{\Lambda} \right)^2\psi$. The expectation for the number of zeroes of $\psi$ in a region with area $\Lambda^2$ is~$\pi$.\end{theo}

It was an attempt to assess the exact role of symmetry arguments in this result, and to adapt the models to non-Euclidean geometries, that led to the work reported in this paper; the present article is a mathematical outgrowth of the former articles \cite{AAPinwheels} and \cite{AAVariance}. We accordingly will study Gaussian fields on Riemannian homogeneous spaces, but in contrast to the situation of \S \ref{subsec:intro_reel}, we shall allow for the target space to be an arbitrary finite-dimensional vector space. Theorem \ref{Main}, when specialized to the case where the source and target spaces have the same dimension and the field has samples in an eigenspace of the Laplace-Beltrami operator, will yield the following generalization of Wolf and Geisel's result.

\begin{theo}[special case of Theorem \ref{Main}] Let $X$ be a Riemannian manifold equipped with a transitive metric-preserving action of a Lie group $G$. Let $\psi$ be a smooth Gaussian random field on $X$, with values in a $\dim(X)$-dimensional Euclidean space $V$, such that the random vector $\psi(x)$ of $V$ follows, for every $x \in X$, a standard normal distribution. Assume that the probability distribution of $\psi$ is $G$-invariant and that there exists a positive number $\Lambda$ such that almost all samples of $\psi$ lie in the eigenspace of the Laplace-Beltrami operator $\Delta_X$ for the eigenvalue $E=\left(\frac{2\pi}{\Lambda}\right)^2$. Assume further that the coordinates of $\psi$, in some orthonormal basis for $V$, are independent as random fields.

The nodal set $\psi^{-1}(\{0\})$  is generically a discrete set; write $\mathcal{N}$ for the real-valued random variable recording the number of zeroes of $\psi$ in a region with volume $\Lambda^{\dim(X)}$. Then 
\begin{equation} \label{wg} \mathbb{E}[\mathcal{N}] = {(\dim X )! \left( \frac{\pi}{2} \right)^{(\dim X)/2}}.\end{equation}
\end{theo}

{Specializing to the two-dimensional plane, and its Euclidean motion group,  of course gives the Wolf--Geisel density result (Theorem \ref{th:wolf_geisel}) $-$ where the expected nodal density $\mathbb{E}[\mathcal{N}]$ so strikingly coincidates with the experimentally measured density $\mathcal{N}_{{\mathrm{exp}}}$ of orientation maps. }

\subsection{On the construction of invariant Gaussian fields}\label{subsec:intro_yaglom} The results described in \S \ref{subsec:intro_reel} and \S \ref{subsec:intro_pinwheels} are answers to special cases of the following question, which we will address in \S \ref{sec:spacing} and~\S\ref{sec:densite}.

\begin{enonce*}{Problem A} Let $X$ be a smooth Riemannian manifold equipped with a transitive action of a Lie group $G$ by isometries, and let $V$ be a Euclidean space. Let $\Phi: X \to V$ be a Gaussian random field whose samples are a.s. smooth and whose probability distribution is $G$-invariant. What is the average size of the zero-set of $\Phi$?\end{enonce*}

Our strategy relies on some of the ideas encountered in \cite{AAPinwheels} in relation with Neuroscience: we shall analyze the geometrical redundancies in the field $\Phi$ to define, when $\Phi$ is real-valued a ``characteristic wavelength'' $\Lambda(\Phi)$, and when $\Phi$ is vector-valued, a ``characteristic volume unit'' $\mathcal{V}(\Phi)$. We will then show that the average size of the nodal set can be expressed in terms of these quantities in a manner close to \eqref{wg}; the argument will turn out to involve only general facts about Gaussian fields and Riemannian geometry (together with a powerful version of the Kac-Rice formula for random fields). We will not need to have information on the precise structure of the field or on the kind of symmetry encoded by the group~$G$; but symmetry will be crucial in simplifying the analysis. 

A consequence that may lead to some psychological discomfort is that Problem A can be studied even in the absence of concrete knowledge of what the fields $\Phi$ are. In many interesting cases, however, it is possible to have a very precise idea of what the invariant fields $\Phi$ on $X$ look like, and therefore to answer the following question.

\begin{enonce*}{Problem B} Let $X$ be a Riemannian $G$-homogeneous space as above, and let $V$ be a Euclidean space Describe as explicitly as possible the $G$-invariant smooth Gaussian random fields $\Phi: X \to V$ whose probability distribution is $G$-invariant. \end{enonce*}

This problem is of course of independent interest $-$ it dates back to Kolmogorov \cite{Kolmogorov}. Very general and powerful information was given in 1960 by Yaglom \cite{Yaglom}, who established a deep connection between Problem B and the representation theory of $G$. Yaglom also recognized that this problem is only tractable for special classes of homogeneous spaces $X$; we shall in fact only consider Problem B for  $V = \mathbb{C}$, and limit ourselves to examples of spaces $X$ for which extensive information about the representation theory of $G$ (and its application to harmonic analysis on $X$) is available.

Aside from providing psychological help for our study of Problem A, it is perhaps not absurd to discuss in some detail a few aspects of Problem B, more than fifty years after Yaglom: 
\begin{itemize}
\item[$\bullet$] Some of the aspects of representation theory that are related to Problem B have been made quite explicit over the past decades; in several important cases, the available results are so concrete that it is easy to describe all smooth $G$-invariant Gaussian fields $\Phi: X \to \mathbb{C}$ in a manner practical enough to make it possible (in principle) to simulate every invariant field on a computer.
\item[$\bullet$]  Interest for smooth Gaussian random functions with symmetry properties has risen recently in relation with several applications: let us mention Neuroscience \cite{WolfGeisel}, Optics \cite{BerryDennis} and Sismology \cite{Zerva} in relation with waves diffracted in unpredictable directions, Cosmology \cite{MarinucciPeccatti,Malyarenko} in relation with the study of the Cosmic Microwave Background, and Image Processing \cite{GalerneMorel, Wei} in relation with textures.
\end{itemize}
Considering these two points, it seems that it may be welcome to set down constructions of invariant Gaussian fields on homogeneous cases in several examples of general interest, and to do so in as explicit a manner as possible $-$ even though the general theory is entirely due to Yaglom.

\subsection{Outline of the paper} \label{subsec:outline}

Problems A and B are both intimately related with the fact that one can read off the probability distribution of a real- or complex-valued Gaussian field (and therefore also, in principle, all statistical properties of the field) from its covariance function. We will review the necessary facts in \S \ref{subsec:gauss}. 

We will then consider Problem B. The relationship between invariant Gaussian fields and group representations rests on the observation, due to Yaglom and recalled in \S \ref{subsec:repthy}, that the class of covariance functions of invariant smooth Gaussian fields has an immediate interpretation in terms of matrix elements of unitary representations.

Just as an arbitrary unitary representation can be, in favorable circumstances, expressed in terms of irreductible representations, one may expect invariant Gaussian fields to decompose into a ``sum'' of elementary ones. But the ``decomposition'' theory of unitary representations is tractable only when certain (mild) restrictions on the group $G$ are imposed; accordingly, a good ``decomposition theory'' for random fields is available only for certain classes of spaces $X$.  In \S \ref{subsec:gelfand} and \S \ref{subsec:diffop}, we focus on the particular class of \emph{commutative spaces}. On these special homogeneous spaces, every invariant field can be decomposed as a continuous sum of ``elementary'' fields (called \emph{monochromatic} in our setting), which are related\footnote{The conditions on $X$ for the existence of a ``good decomposition theory'' for random fields are somewhat stronger than the conditions on $G$ for the existence of a ``good decomposition theory'' for representations: see Definition \ref{commut}.} to irreducible unitary representations of $G$. We recall Yaglom's results in \S \ref{subsec:gelfand}, then point out in \S \ref{subsec:diffop} that the spectral theory of $G$-invariant differential operators can provide concrete information about these elementary fields. As an illustration of the fact  that the existence and tractability of smooth Gaussian random fields on a homogeneous space $X$ imposes nontrivial conditions on $X$, we study in \S \ref{subsec:affine} a class of simple examples which show that on a homogeneous space that is not commutative, the theory of smooth invariant random fields can break down completely. 

In \S \ref{sec:exemples}, we use the above results to describe classifications of the invariant fields on a general class of flat commutative spaces, on all positively-curved commutative spaces, and on a special (but very useful) class of negatively-curved spaces.  

It should be very clear that \S \ref{sec:generalites} and \S \ref{sec:exemples} are to a large extent a synthesis and an exposition of well-known theorems and methods, most of which are a half-century old. We merely intend to point out that Yaglom's general results can now, for several classes of interest, be given an extremely concrete form. Our aim in \S \ref{sec:exemples} is to give his results a practical enough shape to allow for computer simulation of all $G$-invariant fields, conditional on the evaluation of some concrete invariants that appear in the representation theory of $G$. {Everything relies on well-known facts: in particular, the discussion in \S \ref{subsec:gelfand} is closely based on Yaglom and that of \S \ref{subsec:diffop} has become standard lore in invariant harmonic analysis. However, it seems the counterexample of \S \ref{subsec:affine} and the fully explicit descriptions on the  classes of examples in \S \ref{sec:exemples} may be new in this generality. (For the negatively-curved case of \S \ref{subsec:hyperbolique}, see also the recent preprint \cite[Section 3.6]{AbertBergeronLeMasson}.) }

This material will hopefully furnish enough background for our analysis of the average size of the nodal set, Problem A, to which we turn in \S \ref{sec:spacing} and \S \ref{sec:densite}. Again, our results are simple and straightforward consequences of deep and general Kac-Rice formulae \cite{AdlerTaylor, AzaisWschebor}, making the whole paper somewhat expository.

We will work in the general context of Riemannian homogeneous spaces (without the commutativity hypothesis) and will not need to call in the link with representation theory: in fact, the only results technically necessary for our proofs in \S \ref{sec:spacing}-\ref{sec:densite} are those of  \S \ref{subsec:gauss} and \S \ref{subsec:repthy}.

In \S \ref{sec:spacing}, we shall attach to any real-valued invariant field $\Phi$ a characteristic length $\Lambda(\Phi)$: if one moves along a geodesic in $X$, this is the average length separating two points where the field takes the same value. A simple application of the one-dimensional Kac-Rice formula will show how this can be evaluated from the spectral theory of the Laplace-Beltrami operator.

In \S \ref{sec:densite}, we state and prove our result on the average nodal volume in a given region of space. The proof is, not surprisingly in view of related studies, a rather standard application of one of the recent  (and powerful) versions  of the Kac-Rice formula for random fields $-$ the one we shall use is due to Aza\"is and Wschebor \cite[Chapter 6]{AzaisWschebor}. 

We emphasize that {although the characteristic length and volume units introduced in \S \ref{sec:spacing}-\ref{sec:densite} do seem new}, the methods used to obtain the results are quite standard and that no important technical obstacle awaits us in the proofs. In fact, after an earlier version of this paper was written, it became apparent that the result of \S \ref{sec:densite} is very close to being an extremely special case of Adler and Taylor's deep theorems on Lipschitz-Killing curvatures, especially \cite[Theorem 15.9.4]{AdlerTaylor}.  Nevertheless, we hope that the definitions and results of \S \ref{sec:spacing}-\ref{sec:densite}, and their rather unlikely origins in Neuroscience, can appear as worthy illustrations of the tremendous simplifications that symmetry can bring to this subject $-$ allowing one to bypass some of the hard analysis and geometry that one usually needs for concrete calculations.

{It is perhaps worth noting that much more challenging problems have recently been solved on some particular examples of symmetric spaces: see for instance the study of fluctutations of the nodal volume for the torus \cite{KKW}, or related problems on spheres and tori \cite{PeccaReel, Cammarota, RossiWigman, Rossi}. There symmetry arguments play a more discreet  but important role $-$ via Fourier analysis and the properties of certain special functions, which can be related to unitary representations by \S \ref{sec:generalites}. It would be interesting to know whether some parts of this deeper analysis can be understood in terms that may have a meaning for wider classes of spaces carrying a group action.   }

\subsubsection*{Acknowledgments} This paper is a substantially reworked version of a chapter in my Ph.D. thesis \cite{AAThese}, prepared at Universit\'e Paris-7 and the Institut de Math\'ematiques de Jussieu-Paris Rive Gauche.  I am deeply grateful to Daniel Bennequin for his advice and support. I thank Djalil Chafa\"i and Laure Dumaz for their more recent help, and the referees of successive versions for their very useful comments.

%-------------------------------------------------------------------------------------------------------%
%-------------------------------------------------------------------------------------------------------%
%-------------------------------------------------------------------------------------------------------%
%-------------------------------------------------------------------------------------------------------%
%-------------------------------------------------------------------------------------------------------%
%-------------------------------------------------------------------------------------------------------%

\section{Invariant fields on homogeneous spaces: general theory and decomposition theorems}
\label{sec:generalites}

%-------------------------------------------------------------------------------------------------------%
%-------------------------------------------------------------------------------------------------------%
%-------------------------------------------------------------------------------------------------------%

\subsection{Gaussian fields and their correlation functions}
\label{subsec:gauss}

 Suppose $X$ is a smooth manifold and $V$ a finite-dimensional Euclidean space. Recall that a \emph{Gaussian random field on $X$ with values in $V$} is a random field $\mathbf{\Phi}$ on $X$ such that for each $n$ in $\mathbb{N}$ and every $n$-tuple $(x_{1}, ... x_{n})$ in $X^n$, the random vector $\left(\mathbf{\Phi}(x_{1}), ... \mathbf{\Phi}(x_{n})\right)$ in $V^n$ has a Gaussian distribution. A Gaussian field $\mathbf{\Phi}$ is \emph{centered} when the map $x \mapsto \mathbb{E}\left[ \mathbf{\Phi}(x) \right]$ is identically zero. It is \emph{continuous}, (resp. \emph{smooth}), when almost every sample map $x \mapsto \mathbf{\Phi}(x)$ is continuous (resp. smooth). It is \emph{qm-continuous}  where `qm' stands for `quadratic mean', when $\mathbb{E}\left[ \lvert \Phi(x)-\Phi(y)\rvert^2\right]$ goes to zero when $x$ goes to $y$ on $X$. It is \emph{qm-smooth} when the conditions discussed in \cite[\S I.4.3]{AzaisWschebor} are satisfied.
 
 The case in which $V$ equals $\mathbb{R}$ is of course important. If $\Phi$ is a real-valued Gaussian field on $X$, its \emph{covariance function} is the (deterministic) map $(x,y) \mapsto \mathbb{E}\left[ \Phi(x) \Phi(y) \right]$ from $X \times X$ to $\mathbb{R}$. A real-valued Gaussian field $\Phi$ is \emph{standard} if it is centered and if $\Phi(x)$ has unit variance at each $x \in X$.

 We shall work with real-valued fields in \S \ref{sec:spacing} and \S \ref{sec:densite}. But when describing fields with symmetry properties, the relationship with representation theory to be detailed in \S \ref{subsec:repthy}  (and used in \S \ref{sec:exemples}) makes it useful that the covariance function, and thus the field as well, be allowed to be complex-valued. A word about the case $V = \mathbb{C}$ is  therefore appropriate. 
  
 A \emph{circularly symmetric Gaussian variable in $\mathbb{C}$} is a complex-valued random variable whose real and imaginary parts are independent and identically distributed (real) Gaussian variables. A \emph{circularly symmetric complex Gaussian field on $X$} is a Gaussian centered random field $Z$ on~$X$ with values in the vector space $\mathbb{C}$, with the additional requirement that $(x,y) \mapsto \mathbb{E}\left[ Z(x) Z(y) \right]$ be identically zero: this condition imposes that $Z(x)$ be circularly symmetric for all $x$, but does not necessitate that $\mathfrak{Re}(Z)(x)$ and $\mathfrak{Im}(Z)(y)$ be uncorrelated if $x$ is not equal to $y$.

 Given  a circularly symmetric complex Gaussian field $Z$ on $X$, the \emph{correlation function} of~$Z$ is the (deterministic) map $(x,y) \mapsto \mathbb{E}\left[ Z(x) \bar{Z}(y) \right]$ from $X \times X$ to $\mathbb{C}$, where the bar denotes complex conjugation. A \emph{standard complex Gaussian field on $X$} is a {circularly symmetric complex Gaussian field on $X$} such that $\mathbb{E}\left[ Z(x) \bar{Z}(x) \right] = 1$ for all $x$. 

In order to relate the complex-valued case and the real-valued case, we note (as in \cite[\S 2.3]{Hida}) that 
\begin{itemize}
\item[$\bullet$] The real part of the correlation function of a standard complex-valued field $Z$ is twice the covariance function of the real-valued field $\mathfrak{Re}(Z)$.
\item[$\bullet$] A standard complex Gaussian field $Z$ has a real-valued correlation function if and only if its real and imaginary parts are independent \emph{as processes}.
\item[$\bullet$] Given a standard real-valued field $\Phi$ on $X$ with covariance function $C$, we can obtain a standard complex-valued field with correlation function $C$ by considering $\frac{1}{\sqrt{2}}\left(\Phi+i\tilde{\Phi}\right)$, where $\tilde{\Phi}$ is an independent copy of $\Phi$.
\end{itemize} 
 Let us now state the theorem which describes the correlation functions of standard complex Gaussian fields.

\begin{prop}\label{cova}
\begin{enumerate} 
\item Suppose $C$ is a deterministic map from $X \times X$ to $\mathbb{C}$. The map $C$ arises as the correlation function of a qm-continuous (resp. qm-smooth), invariant, standard complex-valued Gaussian field if and only if it has the following properties. 

\begin{enumerate}[(a)]
\item For every $x$ in $X$, we have $C(x,x) = 1$.
\item For each $n$ in $\mathbb{N}$ and every $n$-tuple $(x_{1}, ... x_{n})$ in $X^n$, the hermitian matrix $\left( C(x_{i}, x_{j}) \right)_{1 \leq i,j \leq n}$ is nonnegative-definite.
\item The map $C$ is continuous (resp. smooth).
\end{enumerate} 

\item If a map $C$ satisfies (a)-(c) above and if $\mathbf{\Phi}_{1}$ and $\mathbf{\Phi}_{2}$ are continuous (resp. smooth), invariant, standard complex-valued Gaussian fields with correlation function $C$, then $\mathbf{\Phi}_{1}$ and $\mathbf{\Phi}_{2}$ have the same probability distribution.\end{enumerate}
\end{prop}

This is extremely classical: see \cite[\S II.3]{Doob} or \cite[\S 2.3]{Hida}.  \qed

By the remarks above, Proposition \ref{cova} also describes the class of covariance functions of standard real-valued Gaussian fields. 

In relation with the continuity or smoothness of sample paths, we mention that the regularity condition \emph{in quadratic mean} of Proposition \ref{cova} can be replaced by the \emph{almost sure} regularity of sample paths under mild conditions; see \cite[Chapter 1, \S 4.3]{AzaisWschebor}. For instance, for the invariant fields to be discussed below, if the map $C$ is analytic, then it arises as the correlation function of a smooth (and not just q.m. smooth) field  \cite{Belyaev}.

%-------------------------------------------------------------------------------------------------------%
%-------------------------------------------------------------------------------------------------------%
%-------------------------------------------------------------------------------------------------------%

\subsection{Invariant fields; relationship with representation theory}
\label{subsec:repthy}

Henceforth we will assume that the smooth manifold $X$ is equipped with a smooth and transitive action $(g, x) \mapsto g \cdot x$ of a Lie group $G$. A Gaussian field on $X$ with values in $V$ is \emph{invariant} (or \emph{homogeneous}) when the probability distribution of $\mathbf{\Phi}$ and that of the Gaussian field $\mathbf{\Phi} \circ \left( x \mapsto g \cdot x \right)$ are the same for every $g$ in $G$. 

The results of this paragraph and the next are due to Yaglom \cite{Yaglom}. Given the expository nature of the present sections, we will include proofs.

When $\Phi$ is an invariant complex-valued random field on $X$, the correlation function $C:X \times X \to \mathbb{C}$ satisfies 
\begin{equation} \label{covinv} \forall (x,y) \in X^2, \quad \forall g \in G, \quad C(gx, gy) = C(x,y).\end{equation}
We now choose $x_{0}$ in $X$ and write $H$ for the stabilizer of $x_{0}$ in $G$, so that $X$ is diffeomorphic with the coset space $G/H$.  Given a  map $C: X \times X \to \mathbb{C}$, the following two assertions are equivalent: 
\begin{itemize}
\item[$\bullet$] the map $C$ satisfies  \eqref{covinv}, satisfies parts (a)-(b) of Proposition \ref{cova}, and is continuous (resp. smooth),
\item[$\bullet$]  there exists a left-and-right $H$-invariant continuous (resp. smooth) function $\Gamma$ on $G$, taking the value one at $1_{G}$, such that $C(gx, x) = \Gamma(g)$ for every $g$ in $G$ and every $x$ in $X$.
\end{itemize}
 Proposition \ref{cova} thus says that taking covariance functions yields a natural bijection between
\begin{enumerate}[(i)] 
\item probability distributions of qm-continuous (resp. qm-smooth), invariant, standard, complex-valued Gaussian fields on $X = G/H$;
\item[and]
\item positive-definite, continuous (resp. smooth), $H$-bi-invariant functions on $G$, taking the value one~at~$1_{G}$. \end{enumerate}

A positive-definite, continuous, complex-valued function on $G$ which takes the value one at $1_{G}$ is usually called a \emph{state} of $G$. The invariant (smooth) Gaussian fields on $X$ thus correspond bijectively, at least if one identifies any two fields that share the invariant probability distributions, to the $H$-bi-invariant (and smooth) states of $G$. 

A crucial bridge between invariant random fields and unitary representation theory is the fact that unitary representations of $G$ are a natural source of states for $G$. Suppose $T$ is a unitary representation of $G$ on a Hilbert space $\mathcal{H}$ (recall that this means that $T$ is a continuous morphism from $G$ to the unitary group $U(\mathcal{H})$ of $\mathcal{H}$, equipped with the strong operator topology). Then for every unit vector $v$ in $\mathcal{H}$, the map $\Gamma: g \mapsto \langle v, U(g) v \rangle$ turns out\footnote{This is because for every $x_1, ... x_n$ in $G$, the hermitian matrix $\left( C(x_{i}, x_{j}) \right)_{1 \leq i,j \leq n}$ of Proposition \ref{cova}(d) is the Gram matrix associated with the vectors $T(x_1)v, T(x_2)v, ... T(x_n) v$ of the Hilbert space $\mathcal{H}$.
} to be a state of $G$.  In fact, when $G$ is unimodular, \emph{every} state of $G$ is attached with a unitary representation:

 \begin{prop}[Gelfand-Naimark, Segal]  Let $G$ be a unimodular Lie group. A function $m: G \to \mathbb{C}$ is a state of $G$ if and only if there exist a Hilbert space $\mathcal{H}$, a continuous morphism $T: G \to U(\mathcal{H})$, and a unit vector $v$ in $\mathcal{H}$, such that  $m(g) = \langle v, T(g) v \rangle$ for all $g$. Given such data, the function $m$ is $H$-bi-invariant if and only if the vector $v$ of $\mathcal{H}$ is invariant under all operators~$T(h)$, $h \in H$. 
  \end{prop}

 \begin{proof} {Building $\mathcal{H}$ and $T$ out of $m$ is the Gelfand-Naimark-Segal construction: on the vector space $\mathcal{C}_{c}(G)$ of continuous, compactly-supported functions on a  unimodular Lie group $G$, we consider the bilinear form 
\[ \langle f,g\rangle= \int_{G^2} m(x^{-1} y) f(x) \bar{f}(y) dxdy.\] It defines a scalar product on the quotient $\mathcal{C}_{c}(G)\big/ \! \left\{ f \in \mathcal{C}_{c}(G) \ , \ \langle f, f \rangle = 0 \right\}$, and we can complete this quotient into a Hilbert space $\mathcal{H}$; the left regular action of $G$ on $\mathcal{C}_{c}(G)$ then yields a unitary representation of $G$ on $\mathcal{H}$.  } 

{In order to build $v$ out of $m$, we remark that the linear functional $f \mapsto \int_{G} f \bar{m}$ determines a bounded linear functional on $\mathcal{H}$. The Riesz representation theorem yields one  $v$ in $\mathcal{H}$ which has the desired property. }

Given $(\mathcal{H}, T, v)$ such that $m(g)=\langle v, T(g) v\rangle$ for all $g$, the $T(H)$-invariance of $v$ of course implies the $H$-bi-invariance of $m$; in the reverse direction, if $m$ is bi-invariant, upon expanding the scalar product $\langle(T(h)-\text{id}_\mathcal{H})v,(T(h)-\text{id}_\mathcal{H})v\rangle$ one finds zero, so $v$ is $T(H)$-invariant.\end{proof}
 It is a consequence of parts (a)-(b) in Proposition \ref{cova}  that a state of $G$ is a bounded function on $G$ (and that its modulus does not exceed $1$). Thus, the $H$-bi-invariant states of $G$ form a convex subset $\mathcal{C}$ of $L^\infty(G)$. 
 
\subsection{Commutative spaces, elementary spherical functions and monochromatic fields}  \label{subsec:gelfand} We now assume that the group $G$ is unimodular and equip it with a bi-invariant Haar measure.  We can then view $L^\infty(G)$ as the dual of the space $L^1(G)$ of integrable functions and equip it with the weak topology; because of Alaoglu's theorem, the set $\mathcal{C}$ of states of $G$ then appears as a relatively compact, convex subset of $L^\infty(G)$. 

The extreme points of $\mathcal{C}$ are usually known as \emph{elementary spherical functions for the pair~$(G,H)$.} Their significance to representation theory is that they correspond to \emph{irreducible} unitary representations: if $m$ is a state of $G$ and $(\mathcal{H}, T, v)$ is such that $m$ is given by $g \mapsto \langle v , T(g) v \rangle$ as above, then $m$ is an elementary spherical function for $(G,H)$ if and only if the unitary representation $T$ of $G$ on $\mathcal{H}$ irreducible\footnote{Indeed, should there be $T(G)$-invariant subspaces $\mathcal{H}_{1}, \mathcal{H}_{2}$ such that $\mathcal{H} = \mathcal{H}_{1} \oplus \mathcal{H}_{2}$, orthogonal direct sum, writing $v = v_{1} + v_{2}$ with $v_{i}$ in $\mathcal{H}_{i}$, one would have $m(g) = \langle v_{1}, T(g) v_{1} \rangle + \langle v_{2}, T(g) v_{2} \rangle$, and each $g \mapsto \langle v_i, T(g) v_i \rangle$ would be a constant multiple of a state of $G$, thus $m$ would not be an extreme point of $\mathcal{C}$. For the reverse implication, use the Gelfand-Naimark-Segal construction.}. 

It is natural, in view of the Krein-Milman theorem, to expect that general $H$-bi-invariant states (points of $\mathcal{C}$) can be apprehended in terms of the elementary spherical functions (the extreme points of $\mathcal{C}$). This can be made precise when suitable conditions on the pair $(G,H)$, or alternatively on the space $X$, are satisfied.

\begin{defi} \label{commut} A smooth homogeneous space $X = G/K$ is \emph{commutative} when the following two conditions are satisfied:
\begin{itemize}
\item[$\bullet$] $K$ is a compact subgroup of $G$;
\item[$\bullet$] the convolution algebra $\mathbf{L}^1(K \backslash G / K)$ is commutative.
\end{itemize}
One says also that the pair $(G,K)$ is a \emph{Gelfand pair}.
\end{defi}

Then Choquet-Bishop-de Leeuw representation theorem, a measure-theoretic version of the Krein-Milman theorem, exhibits a general K-bi-invariant state as a ``direct integral'' of elementary spherical functions, in a way that mirrors the (initially more abstract) decomposition of the corresponding representation of $G$ into irreducibles. Let $\mathbf{\Lambda}$ be the space of extreme points of $\mathcal{C}$, a topological space if one lets it inherit the weak topology from $L^\infty(G)$. Then Choquet's theorem says every point of $\mathcal{C}$ is the barycentre of a probability measure concentrated on $\mathbf{\Lambda}$, and the probability measure is actually unique in our case: see \cite{Faraut}, Chapter~II.  

 We can summarize the above discussion with the following statement.

\begin{prop}[Godement-Bochner theorem] \label{boch} Suppose $(G, K)$ is a Gelfand pair, and~$\mathbf{\Lambda}$ is the (topological) space of elementary spherical functions for the pair $(G,K)$, or equivalently the (topological) space of equivalence classes of unitary irreducible representations of $G$ having a nonzero $K$-fixed vector. The $K$-bi-invariant states of $G$ are exactly the continuous functions $\gamma$ on $G$ which can be written as $\gamma = \int_{\mathbf{\Lambda}} \varphi_{\lambda} d\mu_{\gamma}(\lambda)$, where $\mu_{\gamma}$ is a probability measure on~$\mathbf{\Lambda}$.  \end{prop}

 Let us make the backwards way from the theory of positive-definite functions for a Gelfand pair~$(G,K)$ to that of Gaussian random fields on $G/K$. It starts with a remark: suppose $m_{1}, m_{2}$ are $K$-bi-invariant states of $G$ and $\Phi_{1}$, $\Phi_{2}$ are independent Gaussian fields whose covariance functions, when turned into functions on $G$ as before, are $m_{1}$ and $m_{2}$, respectively. Then a Gaussian field whose correlation function is ${m_{1} + m_{2}}$ necessarily has the same probability distribution as~$\Phi_{1}+\Phi_{2}$. For sums indexed by a larger parameter space, an application of Fubini's theorem extends this remark to provide a \emph{spectral decomposition for Gaussian fields}, which mirrors the above decomposition of spherical functions:

\begin{prop}[Yaglom] Let $X = G/K$ be a commutative space, and let $\mathbf{\Lambda}$ denote the space of elementary spherical functions for the pair $(G,K)$. 
\begin{itemize}
\item[$\bullet$] For every $\lambda$ in $\mathbf{\Lambda}$, there exists, up to equality of the probability distributions, exactly one Gaussian field whose correlation function is given by $\varphi_{\lambda}$; 

\item[$\bullet$] Suppose $\left( \Phi_{\lambda} \right)_{\lambda \in \Lambda}$ is a collection of mutually independent Gaussian fields, and that for each~$\lambda$, $\Phi_{\lambda}$ is a field whose correlation function is the elementary spherical function $\varphi_{\lambda}$. Then for each probability measure $\mu$ on $\mathbf{\Lambda}$,  the Gaussian field $\Phi$ defined by
$$\Phi(x) =  \int_{\mathbf{\Lambda}} \Phi_{\Lambda}(x) d\mu(\lambda), \quad x \in X$$
has correlation function $\int_{\mathbf{\Lambda}} \varphi_{\lambda} d\mu_{}(\lambda)$.
\end{itemize}\end{prop}
 In \S \ref{sec:exemples}, we shall focus on special cases over which it is possible to give explicit descriptions of~$\Lambda$ and, for each $\lambda$ in $\Lambda$, of the spherical function $\varphi_{\lambda}$ and of a Gaussian field whose correlation function is $\varphi_{\lambda}$. 

%-------------------------------------------------------------------------------------------------------%
%-------------------------------------------------------------------------------------------------------%
%-------------------------------------------------------------------------------------------------------%

\subsection{Relationship with joint eigenvalues of the invariant differential operators}
\label{subsec:diffop}

Let us drop the commutativity hypothesis for a moment. If we are given a smooth homogeneous space $X=G/H$, then the existence of Gaussian fields on $X$ that are both \emph{smooth} and \emph{$G$-invariant} (or, in representation-theoretic terms, the existence of $H$-bi-invariant smooth states of~$G$) imposes constraints on $H$.

One of those is the existence of an invariant Riemannian metric on $X$. Assume that there exists a smooth, $G$-invariant and standard real-valued field $\Phi$ on $X$.  Then for each $p$ in $X$ and every $u,v$ in the tangent space $T_p X$,  we can set
\begin{equation} \label{riem} \eta_p(u,v) := \mathbb{E}\left[ (d\Phi(p) u)(d\Phi(p)v) \right]. \end{equation}
The expectation on the right-hand-side  has a meaning because the samples of $\Phi$ are almost surely smooth, and since $\Phi$ is standard and $G$-invariant,  formula \eqref{riem} induces a $G$-invariant Riemannian metric $\eta$ on $X= G/H$. Such a metric does not always exist; the simplest way for the existence to be guaranteed is for $H$ to be compact.

Even when $H$ is compact, the classification of $H$-bi-invariant smooth states of $G$ can be a difficult problem. But it becomes tractable (at least in principle) when $X$ is a commutative space. The Godement-Bochner theorem above says that one needs only classify the elementary spherical functions, and it turns out that the elementary spherical functions are solutions of systems of partial differential equations: they are joint eigenfunctions for all $G$-invariant differential operators on $X$. The next two statements will summarize what we need; see \cite[\S 8.3]{Wolf}.

\begin{prop}[Thomas] \label{thomas} Let $X = G/K$ be a smooth  homogeneous space, and let $\mathcal{D}_{G}(X)$ denote the algebra of $G$-invariant differential operators on $X$. Assume that $G$ is connected and that $K$ is compact. Then $X$ is a commutative space (Definition \ref{commut}) if and only if  the algebra~$\mathcal{D}_{G}(X)$ is commutative. 
\end{prop}

\begin{theo}[Gelfand, Godement, Helgason] \label{diffop} Suppose $X=G/K$ is commutative. A smooth, $K$-bi-invariant function $\phi: G\to \mathbb{C}$ is an elementary spherical function for $(G,K)$ if and only if there is, for each $D$ in $\mathcal{D}_{G}(X)$, a complex number $\chi_\varphi(D)$ such that the function $\varphi: X \to \mathbb{C}$ induced by $\phi$ satisfies
\begin{equation} \label{edp} D \varphi = \chi_{\varphi} (D) \varphi. \end{equation}
The eigenvalue assignment $D \mapsto \chi_{\varphi}(D)$ defines a character\footnote{Recall that a character of $\mathcal{D}_{G}(X)$ is a morphism of algebras from $\mathcal{D}_{G}(X)$ to $\mathbb{C}$.} of the commutative algebra $\mathcal{D}_{G}(X)$. It determines the spherical function $\phi$: for every character $\chi$ of $\mathcal{D}_{G}(X)$, there is at most one $K$-bi-invariant state $\phi$ of $G$ that satisfies  \eqref{edp}.
\end{theo}

Suppose $X$ is a smooth, commutative space. We wish to study standard Gaussian fields on~$X$; by Proposition \ref{cova}, it is enough to study their correlation functions; by Proposition \ref{boch}, that study can be reduced to the case when the covariance function is an elementary spherical function; by Theorem \ref{diffop}, the study of these spherical functions reduces to spectral theory for the algebra  $\mathcal{D}_{G}(X)$.

\begin{defi} A centered Gaussian random field on $X$ whose correlation function is an elementary spherical function will be called \emph{monochromatic}; the corresponding character of $\mathcal{D}_{G}(X)$ will be called its \emph{spectral parameter}. \end{defi}

A significant consequence  of Theorem \ref{diffop} and \S \ref{subsec:repthy}-\ref{subsec:gelfand} is that all statistical properties of a monochromatic field can, in principle, be expressed in terms of its spectral parameter. Suppose for instance that the commutative algebra $\mathcal{D}_{G}(X)$ is finitely generated. Then any character of~$\mathcal{D}_{G}(X)$ is specified by a finite collection of real numbers; to any monochromatic field, one can thus attach a finite collection of numbers, and the information about the field (for instance about its nodal volume) can in principle be expressed in terms of those numbers. 
 
We note that on a commutative space $X$, once we fix an invariant Riemannian metric, there is always in $\mathcal{D}_{G}(X)$ a distinguished element: the Laplace-Beltrami operator $\Delta_X$. In some important cases (like spheres, Euclidean spaces and hyperbolic spaces, and more generally two-point homogeneous spaces), the algebra $\mathcal{D}_{G}(X)$ is actually generated by $\Delta_X$, and \S \ref{subsec:repthy}-\ref{subsec:diffop} prove that the study of invariant random fields and their statistical properties reduces (in principle) to the spectral theory of the Laplacian.

%-------------------------------------------------------------------------------------------------------%
%-------------------------------------------------------------------------------------------------------%
%-------------------------------------------------------------------------------------------------------%

\subsection{A class of examples in which which no invariant field can be continuous} \label{subsec:affine}

In the two previous paragraphs, we saw that when $X$ is a commutative space, a precise enough knowledge of representation theory, or of the spectral theory of invariant differential operators on $X$, can lead to the construction of ``many'' smooth invariant fields on $X$. The remark at the beginning of \S \ref{subsec:diffop} shows that this favorable situation cannot be expected if $X$ is a more general homogeneous space. We would like to point out here that it can happen, on some homogeneous spaces, that even the weaker property of sample path continuity cannot be satisfied by any invariant field. 

Our class of examples will consist of affine spaces: we fix a finite-dimensional vector space $\mathfrak{v}$ and a closed subgroup $H$ of $\text{GL}(\mathfrak{v})$. Write $G = H \ltimes \mathfrak{v}$ for the group of all affine transformations of $\mathfrak{v}$ whose linear part lies in $H$. Suppose we want to study $G$-invariant Gaussian fields on the flat space $\mathfrak{v}=G/H$. 

Let us assume that we are given such a $G$-invariant Gaussian field $\Phi: \mathfrak{v} \to \mathbb{C}$ and that $\Phi$ has continuous sample paths. By Proposition \ref{cova}, the correlation function of $\Phi$  is given by a continuous, bounded and $H$-bi-invariant state $\Gamma$ of $G$. A function on the semidirect product $G=H \ltimes \mathfrak{v}$ that is right-invariant under $H$ is entirely determined by its restriction to $\mathfrak{v}$, so all the information about $\Phi$ is in principle encoded by the restriction $\gamma=\Gamma_{|\mathfrak{v}}$.  We can study $\gamma$ through its Fourier transform $\widehat{\gamma}$, which is a tempered distribution on the vector space dual $\mathfrak{v}^\star$ of $H$. The fact that $\gamma$ is also left-invariant under $H$ implies that $\widehat{\gamma}$ is invariant under the action of $H$ on $\mathfrak{v}^\star$. 

The geometry of the action of $H$ on $\mathfrak{v}^\star$ thus encodes much information about the possible $G$-invariant Gaussian fields on $X$. In some cases, the geometry can be an obstruction to the existence of any continuous field:

\begin{prop}  \label{semid}
Let $G$ be a semidirect product $H \ltimes \mathfrak{v}$ as above. If there is no compact orbit of $H$ in $\mathfrak{v}^\star$ but the trivial one, then the only real-valued standard Gaussian field on $\mathfrak{v}$ whose probability distribution is $G$-invariant is the one whose samples are a.s. constant.
\end{prop}

\begin{proof} Such a field would yield a positive-definite, continuous, $H$-bi-invariant function, say~${\Gamma}$, on $G$, taking the value one at $1_{G}$. We shall see now that there can be no such function except the constant one. Write $\gamma$ for the restriction $\Gamma_{|\mathfrak{v}}$. The map $\gamma$ is nonnegative-definite and $H$-invariant; by Bochner's theorem (see for instance \cite[\S I.2]{Hida}), it is the Fourier transform of a bounded measure $\nu_{\gamma}$ on $\mathfrak{v}^\star$, and the measure $\nu_{\gamma}$ must also be $H$-invariant. Now, suppose $\Omega$ is a compact subset of an $H$-orbit in $\mathfrak{v}^\star$; then $\nu_{\Gamma}(\Omega)$ must be equal to $\nu_{\Gamma}(h \cdot \Omega)$ for each $h$ in $H$. If the orbit $H \cdot \Omega$ is noncompact, then there is a sequence $(h_{n})$ in $H^\mathbb{N}$ such that $\cup_{n} h_{n} \cdot \Omega$ is a disjoint union, and so the total mass of $\nu_{\Gamma}$ cannot be finite unless $\nu_{\Gamma}(\Omega)$ is zero. A consequence is that the support of $\nu_{\Gamma}$ must be the union of compact orbits of $H$ in $\mathfrak{v}^\star$. If the only compact orbit is the origin, then $\nu_{\Gamma}$ must be the Dirac distribution at the origin, and $\Gamma$ must be identically one. Proposition \ref{semid} follows.
 \end{proof}
 
 \begin{exem} Suppose $\mathfrak{v} \simeq \mathbb{R}^4$ is Minkowski spacetime and $H$ is the Lorentz group $SO(3,1)$. Then $G = H \ltimes \mathfrak{v}$ is the Poincar\'e group, and the hypothesis of Proposition \ref{semid} is satisfied (all orbits of $H$ in $\mathfrak{v}^\star$ are noncompact quadrics).  As a consequence, no Gaussian random field on Minkowski spacetime can be both continuous and invariant under the isometry group of $\mathfrak{v}$. \end{exem}

%-------------------------------------------------------------------------------------------------------%
%-------------------------------------------------------------------------------------------------------%
%-------------------------------------------------------------------------------------------------------%
%-------------------------------------------------------------------------------------------------------%
%-------------------------------------------------------------------------------------------------------%
%-------------------------------------------------------------------------------------------------------%

\section{Explicit description of smooth invariant fields on some Riemannian homogeneous spaces}

The aim of this section is to give important examples of Riemannian homogeneous spaces over which all smooth invariant fields can be described in a very concrete manner, by bringing together the general results of \S \ref{sec:generalites} (essentially due to Yaglom) with  available facts from representation theory (most of which are extremely classical). All our examples will belong to the special class of commutative spaces.
 
We begin with a homogeneous space $X = G/H$. If $X$ is commutative, then there exists on~$X$ a $G$-invariant Riemannian metric, and every such metric has constant scalar curvature. The difficulty of harmonic analysis on $X$ depends in a spectacular manner on the sign of the curvature: it is quite simple if the curvature of $X$ is nonnegative, and quite a bit more complicated if $X$ is negatively curved. 
 
\label{sec:exemples} 

%-------------------------------------------------------------------------------------------------------%
%-------------------------------------------------------------------------------------------------------%
%-------------------------------------------------------------------------------------------------------%

\subsection{Flat spaces}
\label{subsec:plat} Suppose $X=G/K$ is a \emph{flat} commutative space. Then we know from early work by J.~A.~Wolf (see \cite[\S 2.7]{WolfSCC}) that $X$ is isometric to a product $\mathfrak{v} \times \mathbb{T}^s$ between a Euclidean space and a flat torus. 

We shall consider, mainly for simplicity, the simply connected case. Let $\mathfrak{v}$ be a Euclidean space, let $K$ be a closed subgroup of the orthogonal group $O(\mathfrak{v})$, and let $G$ be the semidirect product $K \ltimes \mathfrak{v}$. Let us consider the commutative space $X = \mathfrak{v}=G/K$. By applying the theory of \S \ref{sec:generalites}, we can describe the $G$-invariant continuous Gaussian fields on $\mathfrak{v}$ from the monochromatic ones, or equivalently, from the elementary spherical functions for the pair $(G, K)$.

The ideas of \S \ref{subsec:affine} then lead to a straightforward description of the elementary spherical functions:

\begin{prop} \label{sphmac} Suppose $\mathfrak{v}$ is a Euclidean vector space, and $K$ is a closed subgroup of $O(\mathfrak{v})$. Let $\Omega$ be a $K$-orbit in $\mathfrak{v}^\star$. Let $T_\Omega$ be the tempered distribution on $\mathfrak{v}$ that arises as the inverse Fourier transform of the Dirac distribution on $\Omega$. Then $T_\Omega$ is given by integration against an analytic function $\psi_\Omega$. Let  $\varphi_\Omega$  be the only $K$-bi-invariant function on $G$ whose restriction  to $\mathfrak{v}$ is~$\psi_{\Omega}$. Then $\varphi_\Omega$ is a constant multiple of an elementary spherical function for the pair $(G, K)$.

In the reverse direction, suppose $\varphi$ is an elementary spherical function for $(G,K)$. Then there exists an orbit $\Omega$ of $K$ in $\mathfrak{v}^\star$ such that the restriction of $\varphi$ to $\mathfrak{v}$ is a constant multiple of $\psi_\Omega$.  \end{prop}

\begin{proof} We first remark that if $\mu$ is a bounded measure on the orbit space $\mathfrak{v}^\star/K$ and the measure $\tilde{\mu}$ on $\mathfrak{v}^\star$ obtained by pulling $\mu$ back with the help of the Hausdorff measure of each $K$-orbit (normalized so that each orbit has total mass one), then the inverse Fourier transform of $\tilde{\mu}$ provides a positive-definite function for $(G, K)$. Indeed, Bochner's theorem says the inverse Fourier transform provides a positive-definite function on $\mathfrak{v}$; by extending that function to a $K$-right-invariant function on $G$, we obtain\footnote{The $K$-bi-invariance is immediate; for the nonnegative-definiteness, we use the fact that if $(k_{1}, v_{1})$ and $(k_{2}, v_{2})$ are elements of $G= K \ltimes V$, then the group product $(k_{1}, v_{1})(k_{2}, v_{2})^{-1}$ is equal to $(k_{1}k_{2}^{-1}, v_{1} -k_{1} k_{2}^{-1} v_{2})$; as a consequence, a $K$-bi-invariant function takes the same value at $(k_{1}, v_{1})(k_{2}, v_{2})^{-1}$ as it does at $(k_{1}^{-1}, 0)(k_{1}, v_{1})(k_{2}, v_{2})^{-1}(k_{2}, 0) = (1_{K}, k_{1}^{-1} v_{1} -k_{2}^{-1} v_{2})$; so if we start from a positive-definite function on $\mathfrak{v}$, we do obtain a positive-definite function on $K \ltimes \mathfrak{v}$. } a $K$-bi-invariant state of $G$. 

Now suppose ${\varphi}$ is a $K$-bi-invariant state of $G$, and let $\gamma$ be its restriction to $V$; this is a bounded $K$-invariant positive-definite continuous function on $V$. The Fourier transform of $\gamma$ is a bounded complex measure $\mu_\gamma$ on $\mathfrak{v}^\star$ with total mass one (because of Bochner's theorem). The measure  is $K$-invariant, and so it yields a bounded measure $\mu_\gamma$ on the orbit space $\mathfrak{v}^\star/K$. If the support of $\mu_\gamma$ is not a singleton, then we can split it into a half-sum of two bounded measures with total mass one and disjoint supports; lifting these two to $\mathfrak{v}^\star$ and taking Fourier transform exhibits our initial state as a sum of two $K$-bi-invariant functions taking the value one at $1_{G}$, and these two are positive-definite because of the previous remarks. So $\varphi$ is an extreme point among the $K$-bi-invariant states if and only if it corresponds, under restriction and Fourier transform, to a $K$-invariant measure concentrated on a single $K$-orbit in $\mathfrak{v}^\star$. This proves the proposition. \end{proof}
\begin{exem} If $X$ is ordinary Euclidean space, so that $\mathfrak{v}=\mathbb{R}^n$ and $K= O(n)$, then the $K$-orbits in $\mathfrak{v}^\star$ are the spheres centered at the origin. Proposition \ref{sphmac} says that the elementary spherical functions are (up to multiplicative constants) the Bessel functions $J_R (x)=  \int_{\mathbf{S}^{n-1}} e^{i \langle R u, x\rangle} du$, $R>0$. \end{exem}

For an affine commutative space $X$, Proposition \ref{sphmac} is a completely explicit description of the elementary spherical functions for $(G,K)$; in fact it is concrete enough to make it easy to simulate monochromatic Gaussian fields on $X$ on a computer. Indeed, suppose $\Omega$ is a $K$-orbit in $\mathfrak{v}^\star$, and suppose we wish to describe the (unique) standard Gaussian field $\Phi_{\Omega}$ on $\mathfrak{v}$ whose covariance function is the elementary spherical function $\varphi_{\Omega}$ of Proposition \ref{sphmac}. By definition, we must have $\mathbb{E}\left[ \Phi_{\Omega}(x) \Phi_{\Omega}(0) \right] = \varphi_\Omega(x)$ for all $x$, and as a consequence almost all samples of $\Phi$ must have their Fourier transform concentrated on $\Omega$. Thus, the samples $\Phi$ are superpositions of waves whose wave-vectors lie on $\Omega$, but have various directions and phases. 

At this point, it seems one can build a realization for $\Phi$ by independently assigning, to each wavevector  $\omega \in \Omega$, a random weight $\zeta(\omega) \in \mathbb{C}$ according to a standard Gaussian distribution, then adding up everything to obtain $\Phi(x) =  \int_{\Omega} e^{i\langle \omega, x\rangle} \zeta(\omega) d\omega$. The idea of using a family of independent variables indexed by an uncountable set $\Omega$ raises technical difficulties of the kind discussed in \cite[\S1.4.3 and \S 5.2]{AdlerTaylor}; we shall thus formulate the idea using the stochastic integral of \cite[\S 5.2]{AdlerTaylor}. Recall that $\Omega$ is a compact submanifold of $\mathfrak{v}^\star$; the Euclidean structure of $\mathfrak{v}^\star$ induces on~$\Omega$ a Riemannian metric, and thus a volume form with finite total mass.

\begin{lemm} \label{Mack} Let $\mu$ be the bounded measure on $\Omega$ inherited from the Euclidean structure of~$\mathfrak{v}^\star$, normalized so as to have total mass one. Let $\mathbf{Z}$ be the Gaussian $\mu$-noise on $\Omega$ introduced in  \cite[\S1.4.3]{AdlerTaylor}.  Then the Gaussian random field on $\mathfrak{v}$ defined, using the stochastic integration of~\cite[\S 5.2]{AdlerTaylor}, by
\begin{equation} \label{adler} \Phi_{\Omega}(x) =  \int_{\Omega} e^{i \langle \omega, x\rangle} d\mathbf{Z}(\omega), \quad x \in \mathfrak{v}, \end{equation}
is $G$-homogeneous, smooth, and has covariance function $\varphi_{\Omega}$. \end{lemm}

\begin{proof} Once $\Phi_{\Omega}$ is defined using the stochastic integral \eqref{adler}, the Lemma follows from \cite[Eq.~(5.2.9)]{AdlerTaylor}.

 \end{proof}

The spectral representation in \eqref{adler} is very concrete (see Figure \ref{fig1}): given points $\omega_1, ..., \omega_n$ on $\Omega$ and a family $\zeta_1, ..., \zeta_n$ of  i.i.d standard Gaussian variables in $\mathbb{C}$, the random map $x \mapsto \frac{1}{n} \sum \limits_{k=1}^n \zeta_k \cdot e^{i \langle \omega_k, x\rangle}$ can give a good approximation for $\Phi$, provided $n$ is ``sufficiently large'' and $\omega_1, ..., \omega_n$ are ``sufficiently uniformly distributed''  on $\Omega$. We omit a precise formulation (see \cite[\S 5.2]{AdlerTaylor}). 

\begin{figure}[h]\label{fig1}
\begin{center}
\includegraphics[width=0.3\textwidth]{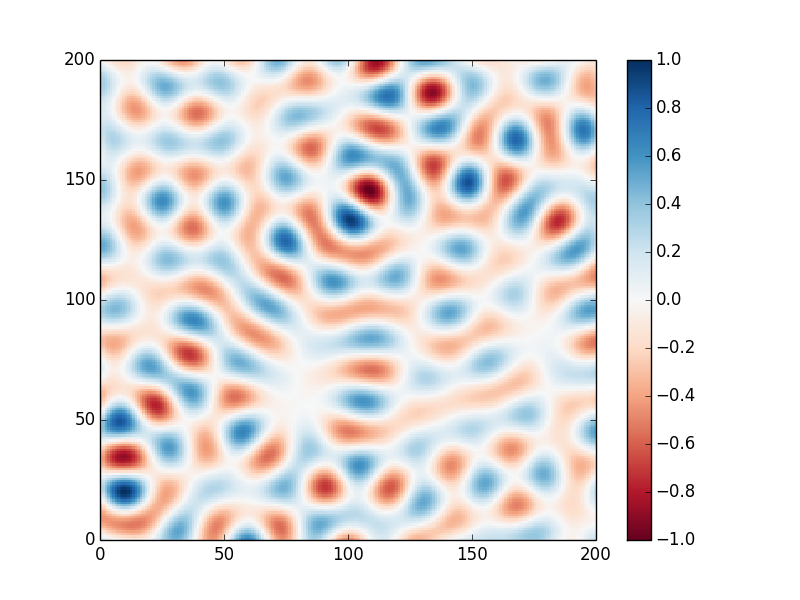}
 \includegraphics[width=0.3\textwidth]{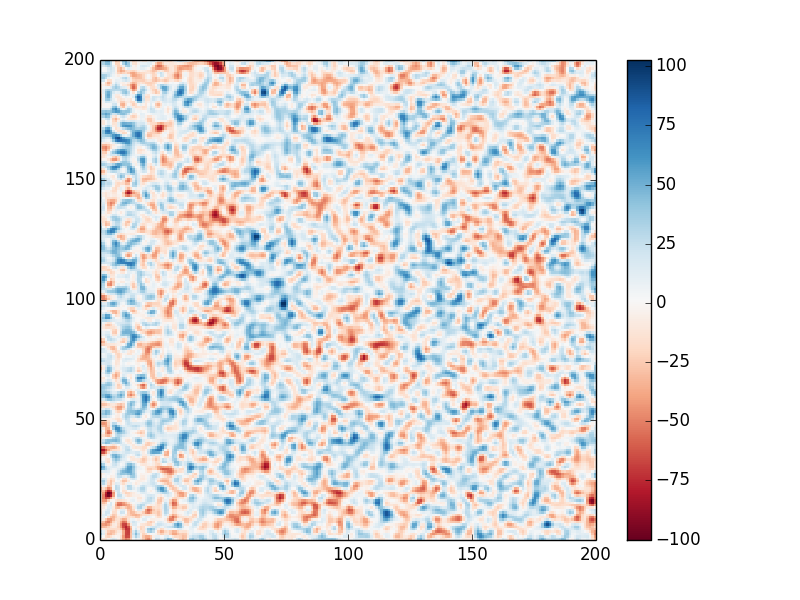} 
 \includegraphics[width=0.3\textwidth]{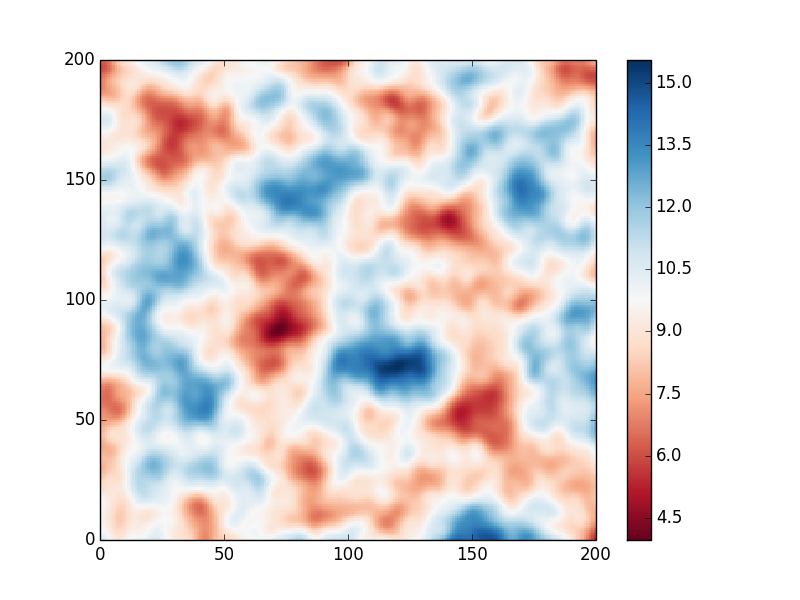}
 \end{center}
\caption{These are three real-valued maps on the plane, each of which is sampled from a real-valued invariant field for the Euclidean motion group. The field from which the map on the left is sampled is monochromatic. Because Proposition \ref{sphmac}, the elementary spherical functions form a half-line; the power spectrum (the measure on $\mathbb{R}^+$ defined in Proposition \ref{boch}) for the  map in the middle is roughly the indicatrix of a segment. The power spectrum for the map on the right has the same support but decreases as $1/R^2$. }
\end{figure}

%-------------------------------------------------------------------------------------------------------%
%-------------------------------------------------------------------------------------------------------%
%-------------------------------------------------------------------------------------------------------%

\subsection{Positively-curved spaces}\label{subsec:spherique}
Suppose the commutative space $X = G/K$ is \emph{positively curved}, and assume for simplicity that $G$ is connected. Then $G$ is a connected \emph{compact} Lie group. 

 In that case, the representation theory of $G$ is extremely classical. The Hilbert spaces for irreducible representations of $G$ are finite-dimensional; so if $T : G \rightarrow U(\mathcal{H})$ is an irreducible representation, one can define a map $\chi_T: G \to \mathbb{C}$  by setting $\chi_T(g)= \text{Trace}(T(g))$ for all $g$ in $G$. The map $\chi_T$  is a {continuous} function on $G$, called  the \emph{global character} of $T$. 

Given an irreducible representation $T$ of $G$, we may form the map $\varphi_T: G \to \mathbb{C}$ defined by
\begin{equation} \label{VanDijk} \forall x \in G, \ \varphi_{T}(x)= \ x \mapsto \int_{K} \chi_T(x^{-1} k) dk \end{equation}
 where $\chi_T$ is the global character of $T$ and and the integration is performed w.r.t the normalized Haar measure of $K$.

\begin{prop}[\cite{VanDijk}, Theorem 6.5.1] \label{vandi} Let $G$ be a connected compact Lie group and $K$ be a closed subgroup of $G$. \begin{enumerate}
\item Suppose $T$ is an irreducible representation of $G$.  Then either the map $\varphi_T$ of \eqref{VanDijk} is an elementary spherical function for $(G,K)$, or $\varphi_T$ is identically zero. It is nonzero if and only if $T$ admits a nonzero $K$-fixed vector.
\item Every elementary spherical function $\varphi$ for $(G,K)$ reads $\varphi=\varphi_T$ for some irreducible representation $T$~of~$G$.
\end{enumerate} \end{prop}

A consequence of Proposition \ref{vandi} is that for compact $G$, obtaining a concrete description  of the elementary spherical functions for $(G,K)$ is easy if one has a precise knowledge of the global characters of irreducible representations. But there are extremely classical formulas (due to Hermann Weyl) for these characters, in terms of a maximal torus of $G$, and of combinatorial data $-$ roots and weights and highest weights. These can be evaluated concretely in many examples (see the comprehensive book \cite{Vilenkin}). 

Let us turn to a concrete description of the monochromatic Gaussian fields rather than their correlation functions. The analogy between \eqref{VanDijk} and the Fourier picture of Proposition \ref{sphmac} leads to an analogue of Lemma \ref{Mack}:

 \begin{lemm} \label{Vdijk} Let $\mu$ be the normalized Haar measure of $K$, and let $\mathbf{Z}$ be the Gaussian $\mu$-noise on $K$ introduced in  \cite[\S1.4.3]{AdlerTaylor}. Let $T$ be an irreducible representation of $G$. Then the Gaussian random field $\Phi_{T}$ on $X=G/K$ defined, using the stochastic integral of \cite[\S 5.2]{AdlerTaylor}, by
$$\Phi_{T}(gK) =   \int_{K}   \chi_T(g^{-1} k)  d\mathbf{Z}(k), \quad x \in G, $$
is $G$-homogeneous, smooth, and has covariance function $\varphi_{T}$. \end{lemm}

\begin{proof} Use \cite[Eq. (5.2.9)]{AdlerTaylor} and the Schur-Weyl orthogonality relations.\end{proof}

Let $T : G \rightarrow U(\mathcal{H})$ be an irreducible representation of $G$ that admits a nonzero $K$-fixed vector. Since we assumed $X$ to be commutative, the space  of $K$-fixed vectors in $\mathcal{H}$ actually has dimension one (see \cite[\S 6.3]{VanDijk}). Let $(\mathbf{e}_{1}, ... \mathbf{e}_{d})$ be an orthonormal basis for $\mathcal{H}$ whose first vector is $K$-invariant.

For every $i$, $j$ in $\{1, ..d\}$, we can consider the \emph{matrix element} $f_{ij}: G \to \mathbb{C}$ defined by $f_{ij}(g)=\langle \mathbf{e}_{i}, T(g) \mathbf{e}_{j} \rangle_{\mathcal{H}}$ for all $g \in G$.

\begin{prop}[\cite{Yaglom}] \label{yag} \begin{enumerate}[(i)] \item The function $f_{11}: g \mapsto \langle \mathbf{e}_{1}, T(g) \mathbf{e}_{1} \rangle_{\mathcal{H}} $ is an elementary spherical function for $(G,K)$; in fact  $f_{11}$ and the function $\varphi_T$ of \eqref{VanDijk} coincide. 

\item  Suppose $(\zeta_{i})_{i = 1, ... d}$ is a collection of i.i.d. standard Gaussian variables. Then the random function
\begin{equation} \label{Yaglom} gK \mapsto \sum \limits_{i= 1}^{d} \zeta_{i} \cdot f_{i1}(g) \end{equation}
is an invariant standard Gaussian random field on $G/K$, whose covariance function is $\varphi_T$. \end{enumerate}
\end{prop}

Yaglom in fact proves  results analogous to (i)-(ii)  when $X=G/K$ is a homogeneous space with compact $G$ (not necessarily a commutative one), but we will omit the more general description. \\

For compact commutative spaces, Proposition \ref{vandi} and Proposition \ref{yag} provide two different descriptions of the same spherical function and of the corresponding field. The description of Proposition \ref{vandi} is closer in spirit to that of the previous paragraph (and of the next). An advantage of Yaglom's compact-specific picture in \eqref{Yaglom} is that, in some important cases, very explicit bases $(\mathbf{e}_{i})$ and very explicit formulae for the matrix elements $g \mapsto \langle \mathbf{e}_{i}, T(g) \mathbf{e}_{j} \rangle$ are known: the obvious reference is \cite{Vilenkin}. For instance, if $X$ is the two-sphere, then the matrix elements $f_{ij}$ can be chosen to be classical spherical harmonics, and in that case \eqref{Yaglom} is perhaps a more concrete description of the invariant fields than \eqref{VanDijk}. 

We remark that Baldi, Marinucci and Varadarajan proved relatively recently \cite{BaldiMarinucci} that  \eqref{Yaglom} is in a sense the only way to build an invariant Gaussian field from a linear combination of the matrix elements $f_{ij}$.
\begin{figure}\label{fig2}
\begin{center}

\includegraphics[width=0.35\textwidth]{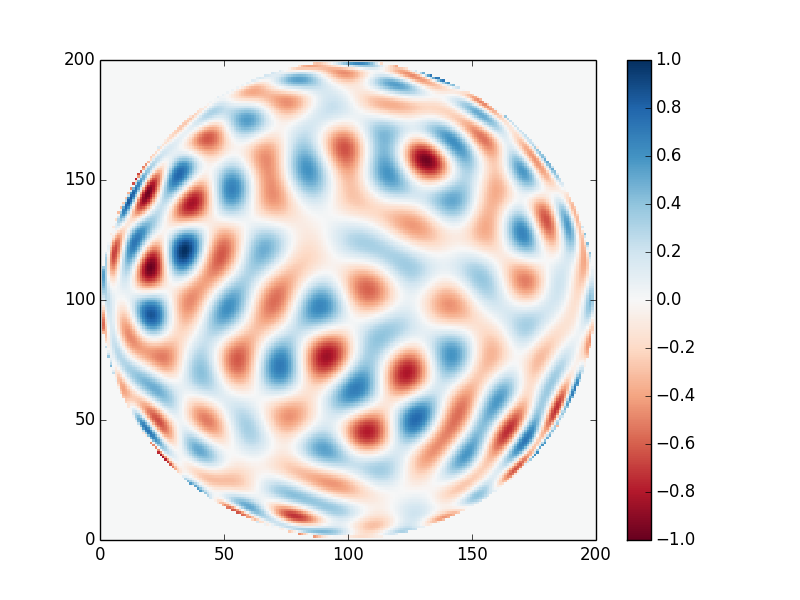}

\caption{A sample from a real-valued monochromatic field on the sphere. This uses a combination of spherical harmonics with i.i.d. Gaussian coefficients.}
\end{center}
\end{figure}
%

%-------------------------------------------------------------------------------------------------------%
%-------------------------------------------------------------------------------------------------------%
%-------------------------------------------------------------------------------------------------------%

\subsection{Negatively-curved spaces: the particular case of symmetric spaces of the noncompact type}
\label{subsec:hyperbolique}

Suppose the commutative space $X = G/K$ is \emph{negatively curved}. Then $G$ cannot be compact; without any additional hypothesis on $G$ it can be quite difficult to do geometry and analysis on $X$, and the representation theory of $G$ can be quite wild. The situation is much better understood if $X$ is a \emph{Riemannian symmetric space of the noncompact type}: the isometry group of $X$ is then a semisimple Lie group, and the representation theory of $G$ is a vast, classical and deep subject.

 In that case, much is known about the geometry of $G/K$ and about the structure of the algebra $\mathcal{D}_{G}(X)$. Harish-Chandra determined the elementary spherical functions for $(G,K)$ in 1958; Helgason later reformulated his discovery in a way which brings it very close to Proposition \ref{sphmac}. For the contents of this subsection, see chapter III in \cite{HelgasonGASS}. We shall use much standard notation and refer to \cite[Ch. V-VII]{KnappPrincetonBook} or \cite[Ch. VI-VII]{KnappLieGroups} for background.

 Suppose $G$ is a closed connected subgroup of $GL(n, \mathbb{R})$ for some $n>0$, and assume that $G$ is stable under transpose and has finite center. Let $K$ be the set of orthogonal matrices in $G$; the group $K$ is a maximal compact subgroup of $G$ and it is the symmetric space $X=G/K$ that we shall study. Let $\mathfrak{g}$ denote the Lie algebra of $G$, let $\mathfrak{a}$ denote the set of diagonal matrices in $\mathfrak{g}$ and $A$ denote the subgroup $\exp(\mathfrak{a})$ of $G$. Let $\mathfrak{n}$ denote the set of block upper triangular matrices in $\mathfrak{g}$ whose diagonal blocks are zero, and let $N$ denote the subgroup $\exp(\mathfrak{n})$ of $G$. By the Iwasawa decomposition, every element $g$ in $G$ can be written uniquely as a product $k e^{H} n$ in which $k$ lies in $K$, $H$ lies in $\mathfrak{a}$ and $n$ lies in $N$. The map  $\mathfrak{A}: G \to \mathfrak{a}$ which takes $x=ke^{H}n$ to $\mathfrak{A}(x) = H$ is smooth.  Besides, if $H$ lies in $\mathfrak{a}$ and $U$ lies in $\mathfrak{n}$, then $HU-UH$ lies in $\mathfrak{n}$. Define a linear form $\rho:  \mathfrak{a}\to \mathbb{R}$ by setting $\rho(H) = \frac{1}{2}\left(\text{trace of the endomorphism $U \mapsto HU-UA$ of  $\mathfrak{n}$}\right)$. 
  
 Now suppose $\lambda$ is in $\mathfrak{a}^\star$ and $b$ is in $K$. Define
\begin{align*} e_{\lambda,b} : G \rightarrow & \ \mathbb{C} \\
x \mapsto & \  e^{\braket{i\lambda + \rho \ | \ \mathfrak{A}(b^{-1} {x})}}. \end{align*}
This is a smooth and right-$K$-invariant function from $G$ to $\mathbb{C}$, and thus it induces a function, for which we will also write $e_{\lambda,b}$, from $X = G/K$ to $\mathbb{C}$. This map is an eigenfunction of $\Delta_{X}$, with eigenvalue\footnote{Here the norm is the one induced by the Killing form.} $-\left(\norme{|\lambda}^2 + \norme{\rho}^2\right)$. We shall call $e_{\lambda,b}$ a \emph{Helgason wave} on $X$: it  is useful for harmonic analysis on $X$ in about the same way as plane waves are for classical Fourier analysis.

There are relations between the various Helgason waves, about which we now say a word.

If $(\lambda_{1}, b_{1})$ and $(\lambda_{2}, b_{2})$ are elements of $\mathfrak{a}^\star \times K$, then $e_{\lambda_{1},b_{1}}$ and $e_{\lambda_{2},b_{2}}$ coincide if and only if there is an element $k \in K$ such that (a) conjugation by $k$ preserves the property of being diagonal, (b) $\lambda_{1}\circ \left( \text{conjugation by $k$} \right) = \lambda_{2}$, and (c) $b_{1} = k b_{2}$.

Let $M$ be the subgroup of $K$ that consists of those orthogonal matrices $k \in K$ for which conjugation by $k$ acts trivially on $\mathfrak{a}$: the elements of $M$ are block-diagonal matrices with orthogonal diagonal blocks. Among the elements of $\mathfrak{a}$, say that an element  $H$ is \emph{regular} when the subgroup $\left\{ k \in K \ : \  k^{-1} H k = H\right\}$ of $K$ is equal to $M$. Let $\mathfrak{a}^\star$ be the vector space dual of $\mathfrak{a}$; if we use the (nondegenerate) Killing form to identify $\mathfrak{a}^\star$ with $\mathfrak{a}$, we obtain a notion of regular element in $\mathfrak{a}^\star$. It turns out that the set of regular elements in $\mathfrak{a}^\star$ is the complement of a finite number of hyperplanes. Among the connected components of the set of regular elements, each of which is an open cone in $\mathfrak{a}^\star$, the relationship between $\mathfrak{a}$ and $\mathfrak{n}$ singles out a component $\mathcal{C} \subset \mathfrak{a}$ (see \cite[Chap. VII]{KnappPrincetonBook}). Each of the $e_{\lambda,b}$ coincides with one of the $e_{\lambda^+,b^+}$, where $\lambda^+$ runs through the closure $\Lambda^+$ of $\mathcal{C}$ in $\mathfrak{a}^\star$ and $b^+$ runs through $K$.
 
 We come now to Harish-Chandra's description of the elementary spherical functions for $(G,K)$ in terms of Helgason waves.

\begin{theo}[Harish-Chandra]
For each $\lambda$ in $\Lambda^+$, the map
$$ \varphi_{\lambda} := x \mapsto \int_{K} e_{\lambda, b}(x) db $$
is\footnote{Here the invariant measure on $K$ is normalized so as to have total mass one.} an elementary spherical function for $(G,K)$. Every spherical function for $(G,K)$ is one of the  $\varphi_{\lambda}$, $\lambda \in \Lambda^+$.
\end{theo}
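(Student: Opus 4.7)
The plan is to reduce the statement to Theorem 3.1: I must verify that each $\varphi_{\lambda}$ is a smooth, normalized, $K$-bi-invariant joint eigenfunction of the commutative algebra $\mathcal{D}_{G}(X)$, and then that the resulting characters $\chi_{\varphi_{\lambda}}$ exhaust the character space of $\mathcal{D}_{G}(X)$ as $\lambda$ ranges over $\Lambda^+$.

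The easy verifications come first. Normalization: for $b \in K$ the Iwasawa decomposition of $b^{-1}$ has trivial $A$-part, so $\mathfrak{A}(b^{-1}) = 0$, hence $e_{\lambda,b}(1_{G}) = 1$ and $\varphi_{\lambda}(1_{G}) = \int_{B} 1\,db = 1$. Right $K$-invariance is built into the construction, since $\mathfrak{A}(\cdot)$ descends to $G/K$. For left $K$-invariance, the substitution $b \mapsto kb$ in the defining integral, combined with the left $K$-invariance of the normalized measure on $B = K/M$, yields $\varphi_{\lambda}(kx) = \varphi_{\lambda}(x)$ for every $k \in K$. Smoothness follows by differentiation under the integral sign since $B$ is compact and $e_{\lambda,b}$ is smooth in $x$ with estimates uniform in $b$.

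The joint-eigenfunction property is the core of the argument. The key input, which must be invoked without proof, strengthens the Laplacian eigenvalue statement already in the excerpt: for every $D \in \mathcal{D}_{G}(X)$ the function $e_{\lambda,b}$ is a $D$-eigenfunction with an eigenvalue $\chi_{\lambda}(D)$ that depends only on $\lambda$ and $D$, and crucially \emph{not} on $b$. This is the content of the Harish-Chandra homomorphism from $\mathcal{D}_{G}(X)$ to the ring $S(\mathfrak{a}_{\C})^W$ of Weyl-invariant polynomials; the Laplacian case with eigenvalue $-(\norme{\lambda}^2 + \norme{\rho}^2)$ is one instance. Granted this, compactness of $B$ lets me interchange $D$ with the integration:
\[ D\varphi_{\lambda}(x) = \int_{B} D e_{\lambda,b}(x) \, db = \chi_{\lambda}(D) \int_{B} e_{\lambda,b}(x) \, db = \chi_{\lambda}(D) \, \varphi_{\lambda}(x). \]
Theorem 3.1 then identifies $\varphi_{\lambda}$ as the elementary spherical function with character $\chi_{\lambda}$.

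For the surjectivity clause, the Harish-Chandra isomorphism identifies the character space of $\mathcal{D}_{G}(X)$ with $\mathfrak{a}_{\C}^\star / W$. Two reductions remain: (i) pass from $\mathfrak{a}_{\C}^\star$ to $i\mathfrak{a}^\star$, which amounts to Helgason's positive-definiteness theorem to the effect that the positive-definite spherical functions of a noncompact Riemannian symmetric pair have spectral parameter on the imaginary axis; (ii) use that the closed positive Weyl chamber $\Lambda^+ \subset i\mathfrak{a}^\star$ is a fundamental domain for $W$, which accounts for the redundancy $e_{\lambda_{1},b_{1}} = e_{\lambda_{2},b_{2}}$ under $\lambda \mapsto w\lambda$ noted just before the theorem. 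Combined with the uniqueness clause in Theorem 3.1, this shows every elementary spherical function coincides with some $\varphi_{\lambda}$ for $\lambda \in \Lambda^+$. The principal obstacle throughout is the $b$-independence of the eigenvalues used in the third paragraph: it is a repackaging of Harish-Chandra's computation of the radial parts of invariant differential operators and cannot be shortcut without redoing a substantial piece of his theory.
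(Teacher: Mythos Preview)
The paper does not supply a proof of this theorem: it is quoted as a classical result of Harish-Chandra, with the reader referred to Helgason's books (\cite{H3}, chapter III). There is therefore no in-paper argument to compare against, and your sketch is in fact the standard route one finds in those references: reduce to the joint-eigenfunction characterization (Theorem~3.1 here), check normalization and $K$-bi-invariance directly, and isolate the Harish-Chandra homomorphism as the non-trivial input guaranteeing that the eigenvalue of $e_{\lambda,b}$ under each $D \in \mathcal{D}_G(X)$ is independent of $b$. You have identified the load-bearing step correctly and honestly flagged it as a black box.

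There is, however, a genuine gap in your surjectivity argument. Your step (i) asserts a ``Helgason positive-definiteness theorem'' to the effect that the positive-definite spherical functions all have spectral parameter in $i\mathfrak{a}^\star$. No such theorem exists in that form: already for $G = SL(2,\R)$ the complementary series furnishes positive-definite spherical functions whose parameter lies off $i\mathfrak{a}^\star$. The correct description of the positive-definite locus inside $\mathfrak{a}_\C^\star / W$ is due to Kostant (with the related boundedness question settled by Helgason--Johnson), and it is strictly larger than $i\mathfrak{a}^\star / W$. Since the paper's definition of ``elementary spherical function'' is \emph{extreme point among $K$-bi-invariant states}, i.e.\ positive-definite, the theorem as stated in the paper is itself imprecise on exactly this point; you have inherited the imprecision rather than introduced it, but your proposed justification of it is not salvageable. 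A careful version would either enlarge $\Lambda^+$ to the full positive-definite locus, or restrict attention to the spherical principal series from the outset.
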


 Thus the possible spectral parameters for monochromatic fields occupy a closed cone $\Lambda^+$ in the Euclidean space $\mathfrak{a}^\star$ (and the topology on the space of spherical functions described in \S \ref{subsec:gelfand} coincides with the topology inherited from $\mathfrak{a}^\star$). 
 
 Part of the above theorem says that spherical functions here again appear as a constructive interference of waves propagating in various directions. This yields an explicit description for the monochromatic field with spectral parameter $\lambda$, in the spirit of (and with the same proof as) Lemma \ref{Mack}:

\begin{lemm} \label{Helgason} Let $\mu$ be the normalized Haar measure of $K$, and let $\mathbf{Z}$ be the Gaussian $\mu$-noise on $K$ introduced in  \cite[\S1.4.3]{AdlerTaylor}. Then the Gaussian random field on $X=G/K$ defined, using the stochastic integral of \cite[\S 5.2]{AdlerTaylor}, by
$$\Phi_{\lambda}(gK) = \int_{K} e_{\lambda,b}(g) d\mathbf{Z}(b), \quad g \in G, $$
is $G$-homogeneous, smooth, and has covariance function $\varphi_{\lambda}$. \end{lemm}

\begin{figure}[!h] \label{fig3}
\begin{center}

\includegraphics[width=0.35\textwidth]{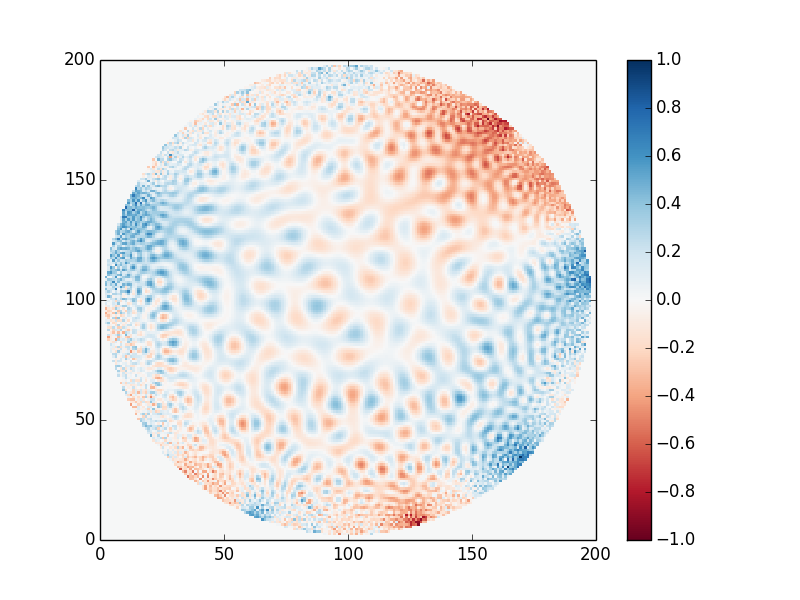}

\caption{A real-valued map on the Poincar\'e disk, sampled from a monochromatic field using Lemma \ref{Helgason}. }
\end{center}
\end{figure}

%-------------------------------------------------------------------------------------------------------%
%-------------------------------------------------------------------------------------------------------%
%-------------------------------------------------------------------------------------------------------%
%-------------------------------------------------------------------------------------------------------%
%-------------------------------------------------------------------------------------------------------%
%-------------------------------------------------------------------------------------------------------%

\section{The typical spacing in a real-valued invariant field}
\label{sec:spacing}

Let us start with a homogeneous \emph{real}-valued Gaussian field $\Phi$ on a Riemannian homogeneous space $X$. On the above pictures drawn on two-dimensional Riemannian symmetric spaces, we see that when the correlation function of $\Phi$ is close enough to being an elementary spherical function, one may expect $\Phi$ to exhibit some form of quasiperiodicity (we use the word ``quasiperiodic'' in a loose sense here, not a mathematically precise one). The previous pictures are drawn, of course, on very special (and commutative) spaces, but we shall henceforth work in the general setting of a general homogeneous space $X=G/K$ endowed with a left-$G$-invariant metric.

Let us now see whether we can give a meaning to the ``quasiperiod'' in such a general situation. Draw a geodesic $\gamma$ on $X$, and if $\Sigma_{}$ is a segment on $\gamma$, write $\mathcal{N}_{\Sigma}$ for the random variable recording the number of zeroes of $\Phi$ on $\Sigma$. Because the field $\Phi$ is homogeneous and the metric on $X$ is invariant, the probability distribution of $\mathcal{N}_{\Sigma}$ depends only on the length, say $|\Sigma|$, of $\Sigma$. The identity component of the subgroup of $G$ fixing $\gamma$ is a one-parameter subgroup of $G$, and reads $\exp_{G}(\mathbb{R} \vec{\gamma})$ for some $\vec{\gamma}$ in $\mathfrak{g}$ ; it is isomorphic either to a circle or to the additive group of the real line \footnote{In the particular case where  $X$ is a Riemannian symmetric space, the subgroup under discussion is a circle if $X$ is of the compact type, and a line if $X$ is of the Euclidan or noncompact type.}.

As a consequence, we can pull back $\Phi|_{\gamma}$ to the line $\mathbb{R}\vec{\gamma}$ in $\mathfrak{g}$ and view it as a stationary, real-valued Gaussian field on the real line. In this way, the group exponential relating $\mathbb{R}\vec{\gamma}$ to $\gamma$ sends the Lebesgue measure of $\mathbb{R}$ to a constant multiple of the metric which $\gamma$ inherits from that of $X$. The zeroes of the pullback of $\Phi|_{\gamma}$ to $\mathbb{R}\vec{\gamma}$ can thus be studied through the classical, one-dimensional,  Kac-Rice formula:

\begin{prop}[Rice's formula for the level zero] Suppose $\mathbf{u}$ is a translation-invariant smooth and centered Gaussian field on the real line. Consider an interval $I$ of length $\ell$ on the real line. Write $\mathcal{N}_{I}$ for the random variable recording the number of points $x$ on $I$ where $\mathbf{u}(x) = 0$ ; then
\begin{equation}\label{KacRice} \mathbb{E}\left[ \mathcal{N}_{I} \right] = \ell \cdot  \frac{ \sqrt{\kappa} }{\pi} \end{equation}
where $\kappa = \mathbb{E}\left[\mathbf{u}'(0)^2 \right]$ is the second spectral moment of the field. 
\end{prop}

Now, recall that our current objective is to give a meaning to the ``quasiperiod'' of a field $\Phi$ of the kind displayed on Figures \ref{fig1}, \ref{fig2} and \ref{fig3}. If the structure of repetitions in the field $\Phi$ exhibits some characteristic length $\Lambda$, then on a geodesic segment of length higher (resp. lower) than $\Lambda$, one expects the average number of points where $\Phi$ vanishes to be higher (resp. lower)  than one. This makes the following definition natural.

\begin{defi} \label{typsp} Let $X$ be a Riemannian homogeneous space and let $\Phi$ be a smooth, invariant and centered Gaussian field on $X$. The \emph{typical spacing} of $\Phi$ is the positive number $\Lambda(\Phi)$ such that, for every geodesic segment $\Sigma$ on $X$:
$$ \frac{1}{\Lambda(\Phi)} := \frac{\mathbb{E}\left[ \mathcal{N}_{\Sigma}\right]}{|\Sigma|}. $$ \end{defi}

This does have a meaning: the probability distribution of $\mathcal{N}_{\Sigma}$ depends only on $|\Sigma|$ because of the invariance, and   \eqref{KacRice} shows that that the expectation $\mathbb{E}\left[ \mathcal{N}_{\Sigma}\right]$ depends linearly on~$|\Sigma|$.

For invariant fields with samples in an eigenspace of the Laplacian, a simple application of Rice's formula reveals that the dependence of the typical spacing on the eigenvalue is quite simple:

\begin{prop} \label{Monoch} Let $X$ be a Riemannian homogeneous space, let $\Phi$ be a smooth, centered, invariant Gaussian field on $X$, and let $\beta$ denote the variance of $\Phi(x)$ at any point $x \in X$. 

If the samples of $\Phi$ lie a.s. in the eigenspace $\left\{ f \in \mathcal{C}^\infty(X) \ | \ \Delta_{X} f = E f \right\}$, then the typical spacing $\Lambda(\Phi)$ is given by $$ \Lambda(\Phi) =  \frac{\pi}{{\sqrt{|E|}}}\frac{\sqrt{\dim X}}{\sqrt{\beta}}.$$  \end{prop}

\begin{proof} Let $\gamma$ be a geodesic on $X$, let $x_0$ be a point on $\gamma$ and $\vec{\gamma}$ be the element of the Lie algebra $\mathfrak{g}$ defined at the beginning of \S \ref{sec:spacing}. Let us write $\kappa$ for the second spectral moment of the stationary Gaussian field on the real line, say $\mathbf{u}$, obtained by restricting $\Phi_{\gamma}$ to $\mathbb{R}{\vec{\gamma}}$: the number $\kappa$ is the variance $\mathbb{E}\left[\mathbf{u}'(0)^2\right]$. Because of \eqref{KacRice}, the spacing $\Lambda(\Phi)$ is equal to $\frac{\pi}{\sqrt{\kappa}}$.

 Now, $\mathbf{u}'(0)$ is the derivative of $\Phi$ in the direction $\vec{\gamma}$ (hereafter denoted by $L_{\vec{\gamma}}(\Phi)$). Its variance can be recovered from the second derivative of the covariance function of $\Phi$ in the direction $\vec{\gamma}$. Let us write $\Gamma$ for the covariance function of $\Phi$, turned into a function on $G$ thanks to a choice of base point $x_{0}$ in $X$. Recall that $\Gamma(a^{-1}b) = \mathbb{E}\left[ \Phi(a \cdot x_{0}) \Phi(b \cdot x_{0})\right]$. We can thus evaluate second derivatives in two different ways. If we separately consider the functions $f_{1} : (a,b) \mapsto \Gamma(a^{-1}b)$ and $f_{2} : (a,b) \mapsto  \mathbb{E}\left[ \Phi(a \cdot x_{0}) \Phi(b \cdot x_{0})\right]$ from $G^2$ to $\mathbb{C}$ and write the Lie derivative in the direction $\vec{\gamma}$ with respect to either $a$ or $b$ as $L^{a}_{\vec{\gamma}}$ or $L^{b}_{\vec{\gamma}}$, then we have $\left(L^{a}_{\vec{\gamma}} L^{b}_{\vec{\gamma}} f_{1}\right)(1_{G}) = -L_{\vec{\gamma}}^2\left( \Gamma \right)(1_{G})$, and on the other hand we have $\left(L^{a}_{\vec{\gamma}} L^{b}_{\vec{\gamma}} f_{2}\right)(1_{G}) = \mathbb{E}\left[\left(L_{\vec{\gamma}} \Phi \right)(x_{0})^2 \right]$. The fact that $f_{1} = f_{2}$ then yields
\begin{equation} \label{derivsec} \mathbb{E}\left[\left(L_{\vec{\gamma}} \Phi \right)(x_{0})^2 \right] = -L_{\vec{\gamma}}^2\left( \Gamma \right)(1_{G}).\end{equation}
Recall that we can identify $X$ with the coset space $G/K$, where $K$ is the stabilizer of $x_0$. If $\Gamma_{X}$ is the map $xK \mapsto \Gamma(x)$ from $X=G/K$ to $\mathbb{C}$, then the quantity on the right-hand-side of \eqref{derivsec} is equal to $-L_{\vec{\gamma}}^2\left( \Gamma_{X} \right)(x_{0})$. Of course, the Laplacian on $X$ has much to do with second derivatives: 
\begin{itemize}
\item[$\bullet$] when $X$ is flat and $\Delta_{X}$ is the usual laplacian, we can choose Euclidean coordinates on $X$ such that $\mathbb{R}\vec{\gamma}$ is the first coordinate axis; writing $X_{i}$ for the vector fields generating the translations along the coordinate axes, we then have $\Delta_{X} = \sum \limits_{i=1}^{\dim X} L_{X_{i}}^2$
\item[$\bullet$]In the general case, we can localize the computation and use normal coordinates around $x_{0}$: suppose $(\gamma_{1}^{x_{0}}, ... \gamma_{p}^{x_{0}})$ is an orthonormal basis of $T_{x_{0}} X$, and let $\vec{\gamma}_{1}, ... \vec{\gamma}_{p}$ be elements of $\mathfrak{g}$ whose induced vector fields on $X$ coincide at $x_{0}$ with the $\gamma^{x_0}_{i}$s. Then $\left(\Delta_{X} \Gamma_X\right)(x_{0}) = \sum \limits_{i=1}^{p} \left(L_{\vec{\gamma}_{i}}^2 \Gamma_X\right)(x_{0})$.\end{itemize}

 We now use the fact that the field is $G$-invariant and note that as a consequence, the directional derivatives of $\Gamma_{X}$ at the identity coset are all identical ; so
\begin{equation} \label{switch} \left( L_{\vec{\gamma}}^2 \Gamma_X \right) (x_{0}) =  \frac{1}{\dim X} \left( \Delta_{X} \Gamma_{X} \right)(x_{0}). \end{equation}
 We can now specialize to the case where $\Gamma$ is an eigenfunction of $\Delta_{X}$ for an eigenvalue $E$. Recall that $E$ is nonpositive when the manifold $X$ is compact and nonnegative otherwise; the above then yields
 $$ \kappa = \frac{1}{\dim X} |E| \Gamma(0) =    \frac{1}{\dim X} \beta |E|, $$
 and Proposition \ref{Monoch} follows. \end{proof}

{\begin{rema} The calculation in the proof of Proposition \ref{Monoch} is quite similar to those conducted in \cite[Section 2]{RudnickWigman2D} and \cite[Section 2.2]{RudnickWigman3D} for nodal intersections against a fixed closed curve in the $2$- or $3$-dimensional torus. (I thank the referee for pointing out those papers; note that the calculations in \cite{RudnickWigman2D, RudnickWigman3D} are for any reference curve, not necessarily a geodesic, and the result is independent of the geometry of the curve). The results there, and the methods of proof, do agree with those of Proposition \ref{Monoch}.  \end{rema} }

\begin{exem} \label{eucli} Suppose $X$ is the Euclidean plane, and we start from the monochromatic complex-valued invariant field, say $\mathbf{\Phi}$, with characteristic wavelength $\lambda$. Then its real part $\Phi_{\mathbb{R}}$ has $\beta = 1/2$ and Proposition \ref{Monoch} says that $\Lambda({\Phi}_{\mathbb{R}}) = \lambda$. This we may have expected, since the samples of $\mathbf{\Phi}$ are superpositions of waves with wavelength $\lambda$. \end{exem}

When the curvature is nonzero, however, Proposition \ref{Monoch} seems to say something nontrivial. We shall give two examples of phenomena that it points to: the first is for negatively-curved symmetric spaces, the second is for (positively-curved) compact spaces.

\begin{exem} \label{spacing_hyp} Suppose $X$ is a symmetric space of noncompact type, and we start from a monochromatic invariant field, say ${\Phi}$, with spectral parameter $\omega$ and point-variance $\beta = 1/( \dim X)$. In the notations of \S \ref{subsec:hyperbolique}, we get 
$$ \Lambda(\mathbf{\Phi}) = \frac{2\pi}{\sqrt{|\omega|^2 + |\rho|^2}}.$$
This is not intuitively quite as obvious as Example \ref{eucli} : the samples of $\mathbf{\Phi}$ are superpositions of Helgason waves whose phase surfaces line up at invariant distance $\frac{2\pi}{|\omega|}$. The curvature-induced shift in the typical spacing comes from to the curvature-induced growth factor in the eigenfunctions for $\Delta_{X}$.\end{exem}

\begin{exem} \label{spectralgap} Suppose $X$ is a compact commutative space. Let $c_0$ be the gap between zero and the first nonzero eigenvalue\footnote{When $X$ is a general Riemannian manifold, relating $c_0$ to the geometry of $X$ is a deep question: see for instance \cite{BergerGauduchonMazet}, III.D.} of $\Delta_{X}$. Then $c_0$ provides a nontrivial upper bound for the typical spacing of every smooth and standard invariant field $\Phi$ on $X$: one always has $\Lambda(\Phi) \leq \frac{\pi \dim(X)^{1/2}}{c_0}$. \end{exem}

If $\Phi$ has almost all its samples in an eigenspace of $\Delta_{X}$, the statement in Example \ref{spectralgap} follows from Proposition \ref{Monoch}. If $\Phi$ does not almost surely have its samples in an eigenspace of $\Delta_{X}$, then the statement in Example \ref{spectralgap} can be obtained by splitting $\Phi$ into monochromatic fields using the results of \S \ref{subsec:gelfand}. We should indeed record that if $X$ is commutative and if we start with an \emph{arbitrary} invariant field $\Phi$, we can evaluate the typical spacing of $\Phi$ from that of its monochromatic components:

\begin{lemm} \label{Nonmonoch} Let $X$ be a smooth commutative space, let $\Phi$ be a smooth, invariant, centered, real-valued Gaussian field on $X$, and let $\beta$ denote the variance of $\Phi(x)$ at any point $x \in X$. 

Write the spectral decomposition of $\Phi$ (from \S \ref{subsec:gelfand}) as
$$ \Phi = \int_{\Lambda} \Phi_{\lambda} dP(\lambda); $$
then the typical spacing of $\Phi$ is related to that of the $\Phi_{\lambda}$s as follows:$$ \left(\frac{2\pi}{\Lambda(\Phi)}\right)^2 = {\int_{\Lambda} \left(\frac{2\pi}{\Lambda(\Phi_{\lambda})}\right)^2 dP(\lambda)}.$$
 \end{lemm}

 \begin{proof}
 Let us write $\Gamma$ for the covariance function of $\Phi$, $\varphi_{\lambda}$ for the spherical function with spectral parameter $\lambda$. Note that $\Gamma =   \int_{\Lambda} \varphi_{\lambda} dP(\lambda)$ as we saw, and remember the proof of Proposition \ref{Monoch}: taking up its notations, we there obtained
 $$ \left(\frac{2\pi}{\Lambda(\Phi)}\right)^2 = -L_{\vec{\gamma}}^2\left( \Gamma_{X} \right)(x_{0}).$$
We simply need to evaluate $L_{\vec{\gamma}}^2\left( \Gamma_{X} \right)(x_{0})$. But the relationship with the Laplacian in Eq. \eqref{switch} still holds, and switching the integration with the Lie derivatives yields % ({argument pour permuter}) 
 $$ L_{\vec{\gamma}}^2\left( \Gamma_{X} \right)(x_{0}) = \frac{1}{\dim(X)} \int_{\Lambda}  \Delta_X\left( \varphi_{\lambda} \right)(x_{0}) dP(\lambda) = - {\int_{\Lambda} \left(\frac{2\pi}{\Lambda(\Phi_{\lambda})}\right)^2 dP(\lambda)}$$
 as announced. \end{proof}
 
This does prove the claim about non-monochromatic fields in Example \ref{spectralgap}, and shows more generally that Proposition \ref{Monoch} can in fact yield information about all invariant fields.

\begin{rema} The additional requirement that $X$ be a commutative space in Lemma \ref{Nonmonoch} makes it easy to write down the decomposition, but seems unnecessarily stringent given the proof: at the cost of complicating the notations, one can presumably evaluate the typical spacing of a general field on a Riemannian homogeneous  space $X$ by using spectral theory to split it into fields with samples in an eigenspace of $\Delta_{X}$. \end{rema}

%-------------------------------------------------------------------------------------------------------%
%-------------------------------------------------------------------------------------------------------%
%-------------------------------------------------------------------------------------------------------%
%-------------------------------------------------------------------------------------------------------%
%-------------------------------------------------------------------------------------------------------%
%-------------------------------------------------------------------------------------------------------%

\section{The mean nodal volume for invariant smooth fields on Riemannian homogeneous spaces}\label{sec:densite}

We return to the general situation of a general Riemannian homogeneous space $X$. We shall describe a simple and general relationship between  the average nodal volume of invariant smooth Gaussian fields on $X$, on the one hand, and the typical spacing of \S \ref{sec:spacing}, on the other hand.

The results of \S \ref{sec:spacing} will yield simple and concrete consequences of the relationship when (a) the field takes values in an eigenspace of the Laplacian, so that Proposition \ref{Monoch} can be applied to obtain information about its typical spacing, or when (b) the homogeneous space $X$ is in fact commutative and Lemma \ref{Nonmonoch} can yield information about any invariant field in terms of its spectral decomposition.

%-------------------------------------------------------------------------------------------------------%
%-------------------------------------------------------------------------------------------------------%
%-------------------------------------------------------------------------------------------------------%

\subsection{Statement of the result}

Suppose $\mathbf{\Phi}$ is a centered smooth invariant Gaussian field on $X$ with values in a finite-dimensional vector space $V$. We promised in the Introduction to define a volume unit appropriate to $\mathbf{\Phi}$, but in \S \ref{sec:spacing} provides only a length unit, and applies only to the case where $V$ is one-dimensional. We shall use coordinates on $V$ to define our volume unit.

For each nonzero $u$ in $V$, the typical spacing $\Lambda\left(\langle u | \mathbf{\Phi}\rangle\right)$ of the projection of $\Phi$ on the axis $\mathbb{R} u$ depends on the variance $\beta_{u}$ of the real-valued Gaussian variable $\Lambda\left(\langle u | \mathbf{\Phi}(p)\rangle\right)$ (here $p$ is any point of $X$); however, the quantity $\sqrt{\beta_{u}}  \Lambda\left(\langle u | \mathbf{\Phi}\rangle\right)$ does not depend on $u$.

\begin{defi} Suppose $\mathbf{\Phi}$ is an invariant Gaussian field on $X$ with values in a finite-dimensional vector space $V$. For every orthonormal basis $(u_{1}, ... u_{\dim V})$ of $V$, we can form the quantity $\prod \limits_{i=1}^{\dim V} \sqrt{\beta_{u_i}}  \Lambda\left(\langle u_i | \mathbf{\Phi}\rangle\right)$. This quantity depends only on  $\mathbf{\Phi}$ and not on the chosen basis. We will call it the \emph{volume unit for} $\mathbf{\Phi}$, and write $\mathcal{V}(\mathbf{\Phi})$ for it. \end{defi}

The terminology is perhaps easiest to understand when $\dim V$ and $\dim X$ coincide, provided $\mathbf{\Phi}(p)$ is an isotropic Gaussian vector and $\beta_{u}$ equals $1$ for each $u$. Our definition is inspired by that case, in fact from the case $\dim(V)=\dim(X)=2$, where it corresponds to the notion of \emph{hypercolumn size} from Neuroscience (see \cite{Hubel} for the biological definition, \cite{WolfGeisel} for its geometrical counterpart, and \cite{AAVariance} for comments). 

It seems appropriate to point out that when $\dim V$ and $\dim X$ do not coincide, our $\mathcal{V}(\Phi)$ does not seem to correspond to the volume of any compelling geometrical object. We shall see, however, that it is natural to interpret $\mathcal{V}(\mathbf{\Phi})$ as a volume unit.\\
 
 If $\mathbf{\Phi}$ is a smooth invariant Gaussian field as above, then the zero-set of $\mathbf{\Phi}$ is a random subset of $X$; for all samples of $\mathbf{\Phi}$ for which $0$ is a regular value, that subset is a $(\dim X -\dim V)$-dimensional submanifold (and is empty if $\dim V > \dim X$). Every submanifold of $X$ inherits a metric, and hence a volume form, from that of $X$. For all samples of  $\mathbf{\Phi}$ for which $0$ is a regular value, this gives a meaning to the volume of the intersection of $\mathbf{\Phi}^{-1}(0)$ with a compact subset of $X$. When $A$ is a Borel region of $X$, we wan thus define a real-valued random variable $\mathcal{M}_{{\mathbf{\Phi}, A}}$ by recording the volume of $A \cap \mathbf{\Phi}^{-1}(0)$ for all samples of $\mathbf{\Phi}$ for which $0$ is a regular value, and recording, say, zero for all samples of $\mathbf{\Phi}$ for which $0$ is a singular value.

\begin{theo} \label{Main} Suppose $\mathbf{\Phi}$ is a centered invariant Gaussian random field on a homogeneous space $X$ with values in a Euclidean space $V$. Assume that the individual component processes of the field $\mathbf{\Phi}$ are independent. Write $\mathcal{M}_{{\mathbf{\Phi},A}}$ for the random variable recording the geometric measure of $\mathbf{\Phi}^{-1}(0)$  in a Borel region ${A}$ of $X$, and $\text{Vol}({A})$ for the volume of $A$. Write $\mathcal{V}(\mathbf{\Phi})$ for the volume unit for  $\mathbf{\Phi}$. Then

\begin{equation} \label{theoreme} {\mathbb{E}\left[ \mathcal{M}_{{\mathbf{\Phi}, A}} \right]} \cdot \frac{\mathcal{V}(\mathbf{\Phi})}{\text{\emph{Vol}}({A})} = (\dim V)!  \left(\frac{\pi}{2}\right)^{{(\dim V)}/2}.\end{equation} \end{theo}

\begin{rema} It is the left-hand side of \eqref{theoreme} that makes it natural to interpret $\mathcal{V}(\mathbf{\Phi})$ as a volume unit. It should be put in print that \eqref{theoreme} says that when expressed in the natural volume unit for $\mathbf{\Phi}$, the density of the zero-set for $\mathbf{\Phi}$ depends only on the dimension of the source and target spaces, and \emph{not on the group acting}. \end{rema}

\begin{rema} I am grateful to the referee of an earlier version for pointing out that Theorem \ref{Main} is not far from being an extremely special case of \cite[Theorem 15.9.4]{AdlerTaylor}. (In \cite[Theorem 15.9.4]{AdlerTaylor}, take $i=\dim(X)-\dim(V)$, $M=A$ and $D=\{0\}$; then all but one of the $\mathcal{L}_k$ of \cite{AdlerTaylor}  disappear). There are slightly stronger hypotheses in \cite{AdlerTaylor}, including a compact base manifold and the assumption that  all the $\beta_{u_{i}}$ are $1$. But lifting the assumptions is not difficult (see e.g. \cite{Taylor2} for the case $\beta_{u_i} \neq 1$, and the easy idea of Lemma \ref{Azai} below for the compactness). Our result can be viewed, therefore, as an exposition of what happens in the presence of symmetries.   \end{rema}

\begin{rema} Invariant Gaussian random fields have ergodicity properties (see \cite{Adler}, chapter 6) which make it possible in principle to evaluate the left-hand side of \eqref{theoreme} from a single sample of $\mathbf{\Phi}$. When looking at a single realization of a random field, observing the precise average size for the zero-set expressed by Theorem \ref{Main} can thus be viewed a signature that the field has a symmetry, regardless of the fine structure of the symmetry involved.\end{rema}

%-------------------------------------------------------------------------------------------------------%
%-------------------------------------------------------------------------------------------------------%
%-------------------------------------------------------------------------------------------------------%

\subsection{Proof of Theorem \ref{Main}}

We will use Aza\"is and Wschebor's Kac-Rice formula for random fields  \cite[Theorem 6.8]{AzaisWschebor}. We should clearly state that the proof of Theorem \ref{Main} is a rather direct adaptation of the one which appears for complex-valued fields on the Euclidean plane and space in \cite{AzaisWscheborLeon, AzaisWscheborLeon2}.

 We first recall their formula, adding an immediate adaptation to our situation where the base space is a Riemannian manifold rather than a Euclidean space. Our adaptation is very close to being a particular Gaussian case of Theorem 12.1.1 in \cite{AdlerTaylor}: since the base manifold there is assumed to be compact, however, we include a proof.

\begin{lemm}\label{Azai}

Suppose $(M,g)$ is a Riemannian manifold, $d$ is a positive integer, and $\Phi:M\rightsquigarrow\mathbb{R}^{d}$ is a smooth Gaussian random field. Assume that the variance of the Gaussian vector $\Phi(p)$ at each point $p$ in $M$ is nonzero. For each $p \in \mathbb{R}^d$, let $p_{\Phi(p)}: \mathbb{R}^d \to \mathbb{R}$ denote the density of the Gaussian random vector $\Phi(p)$ of $\mathbb{R}^d$ with respect to Lebesgue measure. For every Borel subset $A$ in $M$, write $\mathcal{M}_{\mathbf{\Phi},A}$ for the random variable recording the geometric measure of $\Phi^{-1}(0)$.

Assume that $0$ is almost surely a regular value of $\Phi$; in other words, assume that 
\begin{equation} \label{nondeg}\mathbb{P} \left\{ \exists p \in M \ , \ \Phi(p) = 0 \enskip \text{and} \enskip d\Phi(p)  \text{ does not have full rank } \right\} = 0.\end{equation}
Then the average size of the zero-set of $\Phi$ is given by
\begin{equation} \label{AzWs} \mathbb{E}\left[\mathcal{M}_{\mathbf{\Phi},A}\right] = \int_{A} \mathbb{E}\left\{ \enskip \det\! \left[d\Phi(p)d\Phi(p)^\dagger \right]^{1/2} \enskip \big| \enskip \Phi(p) = 0\  \right\} \cdot p_{\Phi(p)}(0)\cdot dVol_{g}(p). \end{equation}
\end{lemm}

\begin{proof} 

After splitting $A$ into a suitable family of Borel subsets, we can work in a single volume-preserving chart and assume that $A$ is contained in an open subset $U$ of $M$ for which there exists a volume-preserving diffeomorphism  $\psi : M \supset U \rightarrow \psi(U) \subset \mathbb{R}^{d}$. We turn $\Phi|_{U}$ into a Gaussian random field $\Psi$ on an open subset of $\mathbb{R}^{\dim M}$ by setting
$$ \Psi \circ \psi = \Phi. $$
We can then apply Theorem 6.8 in \cite{AzaisWschebor} to find the mean nodal volume of $\Psi$ in $\psi(A)$; since the nodal volume of $\Psi$ in $\psi(A)$ is that of $\Phi$ in $A$, the theorem yields
$$ \mathbb{E}\left[\mathcal{M}_{\mathbf{\Phi},A}\right] = \int_{\psi(A)} \mathbb{E}\left\{  |\det\left[d\Psi(x)d\Psi(x)^\dagger \right]|^{1/2}  \ | | \Psi(x) = 0 \right\} p_{\Psi(x)}(0)dx, $$
where the volume element is Lebesgue measure.

If we start from the right-hand-side of ~\eqref{AzWs} and change variables using $\psi$, we get \footnotesize
\begin{align*} & \int_{A} \mathbb{E}\left\{  |\det\left[d\Phi(p)d\Phi(p)^\dagger \right]|  \ \big| \Phi(p) = 0 \right\} p_{\Phi(p)}(0)dVol_{g}(p) =  \\
& \int_{\psi(A)}  \mathbb{E}\left\{  |\det\!\left[d\Phi(\psi^{-1} (x))d\Phi(\psi^{-1} (x))^{\dagger}\right]|^{1/2}  \ \big| \Phi(\psi^{-1}(x)) = 0 \right\} p_{\Phi(\psi^{-1}(x))}(0)\left| \det\!\!\left[ d\psi^{-1}(x) \right] \right| dx = \\
& \int_{\psi(A)}  \mathbb{E}\left\{ | \det\!\!\left[ d\psi^{-1}(x) \right] \det\!\!\left[d\Phi(\psi^{-1} x)\right] \det\!\!\left[d\Phi(\psi^{-1} x)^\dagger \right]  \det\!\!\left[ d\psi^{-1}(x)^\dagger \right] |^{1/2}  \big| \Psi(x) = 0 \right\} p_{\Psi(x)}(0)dx =
 \\
& \int_{\psi(A)}  \mathbb{E}\left\{ | \det\!\!\left[ d\Psi(x)  d\Psi(x)^\dagger \right] |^{1/2}  \ \big| \Psi(x) = 0 \right\} p_{\Psi(x)}(0)dx =  \mathbb{E}\left[\mathcal{M}_{\mathbf{\Phi},A}\right] \end{align*}
\normalsize
(between the first line and the second was a change of variables for integration on Riemannian manifolds, between the second line to the third one should simply remember that there is no randomness in $\psi$; going from the third to the fourth one should remember that \[ \det\left[d\Psi(x)\right] = \det\left[d\Phi(\psi^{-1}(x))\right]\det\left[d\psi(x)^{-1}\right]).\] 
\end{proof}

 Let us return to the case where $\mathbf{\Phi}: X \to V$ is an invariant and smooth field on a homogeneous space with independent components. Choose an orthonormal basis $\mathcal{U}=(u_{1}, ... u_{\dim V})$ of $V$. Write $\beta_{i}$ for the standard deviation of the Gaussian variable $\langle u_{i}, \mathbf{\Phi}(p)\rangle$ at each $p$ (because of the invariance, $\beta_i$ does not depend on $p$). Write $\mathcal{V}$ for the quantity $\beta_{1} ... \beta_{\dim V}$, which is the volume of the characteristic ellipsoid for the Gaussian vector $\mathbf{\Phi}(p)$ at each $p$ and depends neither on $p$ nor on the choice of basis in $V$.

 To prove Theorem \ref{Main}  we need to look for for $\mathbb{E}\left[ \mathcal{M}_{{\mathbf{\Phi}, A}} \right]$, and since the field $\mathbf{\Phi}$ is Gaussian, we know that $p_{\Phi(p)}(0) = {(\mathcal{V})^{-1}(2\pi)^{-(\dim V)/2}}$ for each $p$. In addition, because of the invariance we know that $p \mapsto \mathbb{E}\left[ \Phi(p)^2 \right]$ is a constant function on $X$, so for any vector field $\vec{\gamma}$ on $X$, the Lie derivative $L_{\vec{\gamma}} \Phi$ satisfies
\begin{equation} \label{indep} \mathbb{E} \left[(L_{\vec{\gamma}} \Phi)(p) \Phi(p) \right] = 0.\end{equation}
This independence condition is key to our application of Lemma  \ref{Azai}:
\begin{itemize}
\item[$\bullet$] The almost-sure regularity condition in \eqref{nondeg} is a consequence of this independence and of the Bulinskaya-type lemma in \cite[Proposition 6.11]{AzaisWschebor}. For fixed $p$, the independence in \eqref{indep} implies that the pair $(\Phi(p), d\Phi(p))$ is a Gaussian variable in the Euclidean space $V \times \text{End}(T_{p} X, V)$; by choosing local coordinates for $X$ in a neighborhood of $p$ (and remembering that we already chose a basis $\mathcal{U}$ for $V$, we can apply Proposition 6.12 in \cite{AzaisWschebor} to see that the hypothesis  \eqref{nondeg} holds in some neighborhood of $p$; we can then use the second-countability of $X$ to conclude that Lemma \ref{Azai} can actually be applied in a present situation.
 \item[$\bullet$] Moreover, if we choose a basis in $T_{p} X$ (in addition to the chosen basis $\mathcal{U}$ of $V$) and view $d\Phi(p)$ as a matrix, then the entries of that matrix will be Gaussian random variables which are independent from every component of $\Phi(p)$ because of our assumption that all individual component processes are independent. This means we can remove the conditioning in \eqref{AzWs}.
 \end{itemize}
This leads to considerable simplifications in formula \eqref{AzWs}:  
\begin{equation} \label{preuve}\mathbb{E}\left[ \mathcal{M}_{{\mathbf{\Phi}, A}} \right] = \frac{1}{{(2\pi\mathcal{V}^2)}^{(\dim X)/2}} \int_{A}  \mathbb{E}\left\{  |\det\left[d\Phi(p)d\Phi(p)^\dagger \right]|  \right\} dVol_{g}(p). \end{equation}

 Now, $d\Phi(p)$ is a random endomorphism from $T_{p} X$ to $V$. Recall that if $\gamma$ is a tangent vector to $X$ at $p$ and if we observe the probability distribution of the random vector $ (L_{\gamma} \Phi)(p)=d\Phi(p)\cdot \gamma$ in $V$, then the distribution does not depend on $p$, and does not depend on $\gamma$ either. Thus there is a basis $(v_{1}, ... v_{\dim V})$ of $V$ such that for each $\gamma$ in $T_{p}X$, $\langle (L_{\gamma} \Phi)(p), v_{i} \rangle$ is independent from $\langle (L_{\gamma} \Phi)(p), v_{j} \rangle$ if $i \neq j$: the $v_{i}$s generate the principal axes for the Gaussian distribution of $ (L_{\gamma} \Phi)(p)$. If we choose any basis of $T_{p} X$ and write down the corresponding matrix for $d\Phi(p)$ (it has $\dim X$ rows and $\dim V$ columns), then the columns will be independent and will be isotropic Gaussian vectors in $\mathbb{R}^{\dim X}$. We now add the following very elementary (and certainly not very original) remark.

\begin{lemm} \label{parall} Suppose $M$ is a matrix with $n$ rows and $k$ columns, $n \geq k$. Write $(m_{1}, ... m_{k}) \in (\mathbb{R}^n)^k$ for the columns of $M$. Then the determinant of $MM^{\dagger}$ is the square of the volume of the parallelotope $ \left\{ \sum \limits_{i=1}^{k} t_{i} m_{i} \ \big| \ t_{i} \in [0,1] \right\}$. \end{lemm}

\begin{proof} Choose an orthonormal basis $(m_{k+1}, ... m_{n})$ of $\text{Span}(m_{1}, ... m_{k})^\perp$. Then the signed volume of the $k$-dimensional parallelotope $ \left\{ \sum \limits_{i=1}^{k} t_{i} m_{i} \ \big| \ t_{i} \in [0,1] \right\}$ is the same as that of the $n$-dimensional parallelotope $ \left\{ \sum \limits_{i=1}^{n} t_{i} m_{i} \ \big| \ t_{i} \in [0,1] \right\}$. Now, write $\tilde{M}$ for the $n\times n$ matrix whose columns are the coordinates of the $m_{i}$ in the canonical basis of $\mathbb{R}^n$. Then $\tilde{M}\tilde{M}^{\dagger}$ is block-diagonal, one block is $MM^{\dagger}$ and the other block is the identity because $(m_{k+1}, ... m_{n})$ is an orthonormal family.  Thus the determinant of $MM^\dagger$ is the square of that of $\tilde{M}$, and $\det(\tilde{M})$ is the volume of the parallelotope $\left\{ \sum \limits_{i=1}^{n} \alpha_{i} m_{i} \ \big| \ \alpha_{i} \in [0,1] \right\}$. \end{proof}

 Combining Lemma \ref{parall} and Equation \eqref{preuve}, we are led to calculating the mean Hausdorff volume of the random parallelotope generated by $\dim(V)$ independent isotropic Gaussian vectors in $\mathbb{R}^{\dim X}$.

\begin{lemm} \label{pave} Suppose $u_{1}, ... u_{k}$ are independent isotropic Gaussian vectors with values in $\mathbb{R}^n$, so that the probability distribution of $u_{i}$ is $x \mapsto \frac{1}{\alpha_{i} \sqrt{2\pi}} e^{-\norme{x}^2/2\alpha_{i}^2}$. Write $\mathcal{V}$ for the characteristic volume $\alpha_{1}... \alpha_{k}$, and write $\mathbf{V}$ for the random variable recording the $k$-dimensional volume of the parallelotope $\left\{ \sum \limits_{i=1}^{k} t_{i} u_{i} \ \big| \ t_{i} \in [0,1] \right\}$. Then$$ \mathbb{E}[\mathbf{V}] = (k!) \mathcal{V}. $$
\end{lemm}

\begin{proof} Let us start with $k$ (deterministic) vectors in $\mathbb{R}^n$, say $u_{1}^0, ... u_{k}^0$, and choose a basis $u_{k+1}^0, ... u_{n}^0$ for $\text{Span}(u_{1}^0, ... u_{n}^0)^\perp$. Since $ \text{det} (u_{1}^0, .... u_{k}0) = \text{det} (u_{1}^0, .... u_{n}0) $ is the (signed) volume of the parallelotope generated by the $u_{i}^0$s, we can use the ``base times height'' formula: writing $P_{V}$ for the orthogonal projection from $\mathbb{R}^n$ onto a subspace $V$,
$$ \text{Vol} (u_{1}^0, .... u_{n}^0) = \norme{P_{\text{Span}(u_{2}^0, ... u_{n}^0)^\perp}(u_{1}^0)} \text{Vol}\left(u_{2}^0, ... u_{n}^0\right). $$  
Of course then 
$$  \text{Vol} (u_{1}^0, .... u_{n}^0) = \prod \limits_{i=1}^{k} \norme{P_{\text{Span}(u_{i+1}^0, ... u_{n}^0)^\perp}(u_{i}^0)}. $$
We now return to the situation with random vectors. Because $u_{1}, ... u_{k}$ are independent, the above formula becomes
$$\mathbb{E}\left[ \text{Vol} (u_{1}, .... u_{k}) \right] = \prod\limits_{i=1}^{k}  \mathbb{E}\left[ N(u_{i}, V^{i} ) \right] $$
where $N(u_{i}, V^{i} )$ is the random variable recording the norm of the projection of $u_{i}$ on any $(i)$-dimensinal subspace of $\mathbb{R}^n$. The projection is a Gaussian vector, and so its norm has a chi-squared distribution with $i$ degrees of freedom. Given the probability distribution of $u_{i}$, the expectation for the norm is then $i \alpha_{i}$, and this  proves Lemma \ref{pave}. \end{proof}

We note that the argument in the above two lemmas is very close to that in \cite{Kabluchko}.

 To complete the proof of Theorem \ref{Main}, choose an orthonormal basis $(\gamma_{1}, ... \gamma_{n})$ in $T_{p} X$. Apply Lemma \ref{pave} to the family $\left(\left(L_{\gamma_{i}} \langle v_{i}, \mathbf{\Phi} \rangle\right)(p)\right)_{i=1..n}$. Then ~\eqref{preuve} becomes
$$ \mathbb{E}\left[ \mathcal{M}_{{\mathbf{\Phi}, A}} \right] = \frac{1}{{(2\pi)}^{(\dim V)/2}\mathcal{V}} \text{Vol}(A) \cdot (\dim V)! \cdot \prod \limits_{i=1}^{\dim V} \mathbb{E}\left[\left(L_{\gamma_{1}} (\langle v_{i}, \mathbf{\Phi} \rangle)(x_{0})\right)^2 \right]^{1/2}. $$
 To bring the typical spacing back into the picture, recall that the definition and the Kac-Rice formula \eqref{KacRice} say that $\mathbb{E}\left[\left(L_{\gamma_{1}} (\langle v_{i}, \mathbf{\Phi} \rangle)(x_{0})\right)^2 \right]^{1/2}$ is none other than $\frac{\pi}{\Lambda(\langle v_{i}, \mathbf{\Phi}\rangle)}$. Thus 
 $$  \mathcal{M}_{{\mathbf{\Phi}, A}}  \frac{\mathcal{V} \prod \limits_{i=1}^{d} \Lambda(\langle v_{i}, \mathbf{\Phi}\rangle)}{\text{Vol}(A)}= \frac{\pi^{\dim V}(\dim V)!}{{(2\pi)}^{(\dim V)/2}} . $$
Since $\mathcal{V} \prod \limits_{i=1}^{d}\Lambda(\langle v_{i}, \mathbf{\Phi}\rangle) $ is by definition the volume unit for $\mathbf{\Phi}$, Theorem \ref{Main} is established. 

\qed

\bibliographystyle{smfplain}
\bibliography{afgoustidis_nodal_volume_references}

\end{document}